\numberwithin{equation}{section}
\newcommand{\R}{\mathbb{R}}
\newtheorem{theorem}{Theorem}[section]
\newtheorem{lemma}[theorem]{Lemma}
\newtheorem{Def}[theorem]{Definition}
\newtheorem{prop}[theorem]{Proposition}
\newtheorem{rem}[theorem]{Remark}
\DeclareMathOperator*{\esup}{ess\,sup}
\def \H {{\mathcal H}}
\def \G {{\mathcal G}}
\def \A {{\mathcal A}}
    \def\cliprn{C_{b,Lip}(\mathbb{R}^n)}
    \def\cliprdn{C_{b,Lip}(\mathbb{R}^{d\times n})}
        \def\lipt{Lip(\Omega_t)}
    \def\lip{Lip(\Omega)}
    \def\LGp{L^p_G(\Omega)}
            \def\atheta{\mathcal{A}^{\Theta}_{0,\infty}}
\def \E {{\mathbb E}}
\def \hE {\hat{\mathbb E}}
\def \F {{\mathcal F}}
\def \P {{\mathbb P}}
\def \S {\mathcal{S}}
\def \Q {{\mathbb Q}}
\def \R {{\mathbb R}}
\def \N {{\mathbb N}}
    \def\I{\mathds{1}}
\newcommand{\mail}[1]{\href{mailto:#1}{\texttt{#1}}}
\title{Optimal control with delayed information flow of systems driven by $G$-Brownian motion}
\author{Francesca Biagini\footnote{Department of Mathematics, University of Munich, Theresienstra{\ss}e 39, 80333 Munich, Germany.
Email: \mail{francesca.biagini@math.lmu.de}.}\and Thilo Meyer-Brandis\footnote{Department of Mathematics, University of Munich, Theresienstra{\ss}e 39, 80333 Munich, Germany.
Email: \mail{meyerbr@math.lmu.de}.}\and Bernt {\O}ksendal\footnote{Department of Mathematics, University of Oslo, Box 1053 Blindern, N-0316 Oslo, Norway.
Email: \mail{oksendal@math.uio.no}. The research leading to these results has received funding from the
European Research Council under the European Community's Seventh
Framework Program (FP7/2007-2013) / ERC grant agreement no [228087].}
\and Krzysztof Paczka\footnote{Department of Mathematics, University of Oslo, Box 1053 Blindern, N-0316 Oslo, Norway.
Email: \mail{k.j.paczka@cma.uio.no}.}}
\date{8 February 2014}
\begin{document}
\maketitle

\begin{abstract}
In this paper we study  strongly robust optimal control problems under volatility uncertainty. In the $G$-framework we  adapt the stochastic maximum principle  to find  necessary and sufficient conditions for the existence of  a strongly robust optimal control.   \newline
{\bf Keywords: $G$-Brownian motion, optimal control problem, stochastic maximum principle.}

\end{abstract}

\section{Introduction}\label{section:introduction}
One of the motivations for this paper is to study the problem of optimal consumption and optimal portfolio allocation  in finance under model uncertainty. In particular we  focus here on  volatility uncertainty, i.e. a situation where the volatility affecting the asset price dynamics is unknown and we need to consider a family of different volatility processes instead of just one fixed process (and hence also a family of models related to them). 

Volatility uncertainty has been investigated in the literature by following two approaches, i.e. by introducing an abstract sublinear expectation space with a special process called $G$-Brownian motion (see \cite{Peng_GBM}, \cite{Peng_skrypt}), or by quasi-sure analysis (see \cite{Martini}).   In \cite{Denis_function_spaces} it is proven that these two methods are strongly related. The link between these two approaches is the representation of the sublinear expectation $\hE$ associated with the $G$-Brownian motion as a supremum of ordinary expectations over a tight family of probability measures $\mathcal P$, whose elements are mutually singular:
\[
	\hE[.]=\sup_{\P\in\mathcal P}\E^{\P}[.],
\]
see \eqref{set_measure} and Theorem \ref{tw_denis} for more details.

In this paper we work in  a $G$-Brownian motion setting as in \cite{Peng_GBM} and  use the related stochastic calculus, including the It\^o formula, $G$-SDE's, martingale representation and $G$-BSDE's,  as developed in \cite{Peng_GBM}, \cite{Peng_skrypt}, \cite{Soner_mart_rep}, \cite{Song_Mart_decomp}, \cite{Soner_quasi_anal}, \cite{Complete_mart}, \cite{Peng_bsde}, \cite{Girsanov}. It is important for understanding the nature of the $G$-Brownian motion to note that its quadratic variation $\langle B\rangle$ is not deterministic,  but it is absolutely continuous with the density taking value in a fixed set (for example $[\underline{\sigma}^2,\bar \sigma^2]$ for $d=1$). Each $\P\in\mathcal P$ can be seen then as a model with a different scenario for the quadratic variation. That justifies why $G$-Brownian motion is a good framework for investigating model uncertainty.

In  a $G$-Brownian motion setting one considers the following stochastic optimal control problem: to find the control $\hat u\in\mathcal A$ such that
\begin{equation}\label{opt_problem}
	J(\hat u)=\sup_{u\in\mathcal A}\,  J(u),
\end{equation}
with
\begin{align}
	J(u):&=\hE[\int_0^T f(t,X^u(t),u(t))dt+g(X^u(T))] \label{functional}\\
	&=\sup_{\P\in\mathcal{P}}\, \E^{\P}[\int_0^T f(t,X^u(t),u(t))dt+g(X^u(T))] =:  \sup_{\P\in\mathcal{P}} J^{\P}(u), \nonumber
\end{align}
where $X^u$ is a controlled $G$-SDE, see \eqref{eq_process}. 
This problem has been studied in \cite{utility1}, \cite{utility2}.  In  \cite{utility2} they show that the value function associated with such an optimal control problem satisfies the dynamic programming principle and is a viscosity solution of some HJB equation.\footnote{To be exact, the authors considered a more general problem of recursive utility.} \cite{utility1} investigates the robust investment problem for geometric $G$-Brownian motion and  2BSDE's (which is a version of $G$-BSDE's) are used to find an optimal solution. In both papers the optimal control is robust in the worst case scenario sense. 

It is interesting to note that in the simplest example of the optimal portfolio problem, which is the Merton problem with the logarithmic utility, one can easily prove that there exists a portfolio which is optimal not only in the worst case scenario, but also for all probability measures $\P$ (with the optimality criterion $J^{\P}$). We call this \emph{a strongly robust control}. This strongly robust control is thus optimal in a much more robust sense than the worst case scenario optimality. 
The new strongly robust optimality uses the fact that probability measures $\P$ are mutually singular, hence one can modify the $\P$-optimal control $\hat u^{\P}$ outside the support of a probability measure $\P$ without losing the $\P$-optimality.  As a consequence, if the family $\{\hat u^{\P}\}_{\P \in \mathcal{P}}$ satisfies some consistency conditions, the controls can be aggregated into a unique control $\hat u$, which is optimal under every probability measure $\P$. See \cite{Soner_quasi_anal} for more details on aggregation. 

In this paper we study  strongly robust optimal control problems. However, instead of checking the consistency condition for the family of controls  and using the aggregation theory established in \cite{Soner_quasi_anal}, we  adapt the stochastic maximum principle to the $G$-framework to find  necessary and sufficient conditions for the existence of  a strongly robust optimal control.

The paper is structured in the following way. In Section 2 we give a quick overview on the $G$-framework. Section 3 is devoted to a sufficient maximum principle in the partial information case. In Section 4 we investigate the necessary maximum principle for the full-information case. In Section 5 we give three examples, including the Merton problem with the logarithmic utility, already mentioned earlier. In Section 6 we provide a counter-example and  show that it is not possible to relax the crucial assumption of the sufficient maximum principle without losing the strongly robust sense of optimality.

\section{Preliminaries}\label{section:preliminaries}
	Let $\Omega$ be a given set and $\H$  be a vector lattice of real functions defined on $\Omega$, ie. a linear space containing $1$ such that $X\in\H$ implies $|X|\in\H$. We will treat elements of $\H$ as random variables. 
	\begin{Def}\label{def_sublinear_exp}
		\emph{A sublinear expectation} $\E$ is a functional $\E\colon \H\to\R$ satisfying the following properties
		\begin{enumerate}
			\item \textbf{Monotonicity:} If $X,Y\in\H$ and $X\geq Y$ then $\E [X]\geq\E [Y]$.
			\item \textbf{Constant preserving:} For all $c\in\R$ we have $\E [c]=c$.
			\item \textbf{Sub-additivity:} For all $X,Y\in\H$ we have $\E [X] - \E[Y]\leq\E [X-Y]$.
			\item \textbf{Positive homogeneity:} For all $X\in\H$  we have $\E [\lambda X]=\lambda\E [X]$, $\forall\,\lambda\geq0$.
		\end{enumerate}
		The triple $(\Omega,\H,\E)$ is called \emph{a sublinear expectation space}. 
	\end{Def}
	
We will consider a space $\H$ of random variables having the following property: if $X_i\in\H,\ i=1,\ldots n$ then
	\[
		\phi(X_1,\ldots,X_n)\in\H,\quad \forall \phi\in\cliprn,
	\]
	where $\cliprn$ is the space of all bounded Liptschitz continuous functions on $\R^n$. 
	\begin{Def}
		An $m$-dimensional random vector $Y=(Y_1,\ldots,Y_m)$ is said to be independent of an $n$-dimensional random vector $X=(X_1,\ldots,X_n)$ if for every $\phi\in C_{b,Lip}(R^n\times R^m)$ 
		\[
			\E[\phi(X,Y)]=\E[\E[\phi(x,Y)]_{x=X}].
		\]
		Let $X_1$ and $X_2$ be  $n$-dimensional random vectors defined on sublinear random spaces $(\Omega_1,\H_1,\E_1)$ and $(\Omega_2,\H_2,\E_2)$ respectively. We say that $X_1$ and $X_2$ are identically distributed and denote it by $X_1 \sim X_2$, if for each $\phi\in\cliprn$ one has
		\[
			\E_1[\phi(X_1)]=\E_2[\phi(X_2)].
		\]
	\end{Def}
	\begin{Def}
		A $d$-dimensional random vector $X=(X_1,\ldots,X_d)$ on a sublinear expectation space $(\Omega,\H,\E)$ is said to be $G$-normally distributed if for each $a,b\geq0$ and each $Y\in\H$ such that $X\sim Y$ and $Y$ is independent of $X$, one has
		\[
			aX+bY \sim \sqrt{a^2+b^2}X.
		\]
		The letter $G$ denotes a function defined as
		\[
			G(A):=\frac{1}{2}\E[(AX,X)]\colon \S_d\to \R,
		\]
		where $\S_d$ is the space of all $d\times d$ symmetric matrices. We assume that $G$ is non-degenerate, i.e. $G(A)-G(B)\geq \beta\operatorname{tr}[A-B]$ for some $\beta>0$.
	\end{Def}
	It can be checked that $G$ might be represented as
	\begin{equation}\label{eq_g_exp_representation_theta}
		G(A)=\frac{1}{2}\sup_{\gamma \in\Theta} \textrm{tr}\, (\gamma\gamma^{T}A),	
	\end{equation}
	where $\Theta$ is a non-empty bounded and closed subset of $\R^{d\times d}$.
	\begin{Def}
		Let $G\colon \S_d\to\R$ be a given monotonic and sublinear function. A stochastic process $B=(B_t)_{t\geq 0}$ on a sublinear expectation space $(\Omega,\H,\E)$ is called a $G$-Brownian motion if it satisfies following conditions
		\begin{enumerate}
			\item $B_0=0$,
			\item $B_t\in\H$ for each $t\geq0$.
			\item For each $t,s\geq0$ the increment $B_{t+s}-B_t$ is $G$-normally distributed and independent of $(B_{t_1},\ldots, B_{t_n})$ for each $n\in\N$ and $0\leq t_1<\ldots<t_n\leq t$.
		\end{enumerate}		 
	\end{Def}
	\begin{Def}
		Let $\Omega=C_0(\R_{+},\R^d)$, i.e. the space of all $\R^d$-valued continuous functions starting at $0$. We equip this space with the uniform convergence on compact intervals topology and denote by $\mathcal{B}(\Omega)$ the Borel $\sigma$-algebra of $\Omega$. Let
		\[
			\H=\lip:=\{\phi(\omega_{t_1},\ldots,\omega_{t_n})\colon \forall n\in\N,  t_1,\ldots,t_n\in[0,\infty)\ \textrm{and}\ \phi\in\cliprdn\}.
		\]
		A $G$-expectation $\hE$ is a sublinear expectation on $(\Omega,\H)$ defined as follows: for $X\in\lip$ of the form
		\[
			X=\phi(\omega_{t_1}-\omega_{t_0},\ldots,\omega_{t_n}-\omega_{t_{n-1}}), \quad 0\leq t_0<t_1<\ldots<t_n,
		\]
		we set
		\[
			\hE[X]:=\E[\phi(\xi_1\sqrt{t_1-t_0},\ldots,\xi_n\sqrt{t_n-t_{n-1}})],
		\]
		where $\xi_1,\ldots\xi_n$ are $d$-dimensional random variables on sublinear expectation space $(\tilde{\Omega},\tilde{\H},\E)$ such that for each $i=1,\ldots,n$ $\xi_i$, is $G$-normally distributed and independent of $(\xi_1,\ldots,\xi_{i-1})$. We denote by $\LGp$ the completion of $\lip$ under the norm $\|X\|_p:=\hE[|X|^p]^{1/p}$, $p\geq1$. Then it is easy to check that $\hE$ is also a sublinear expectation on the space $(\Omega,\LGp)$, $\LGp$ is a Banach space and the canonical process $B_t(\omega):=\omega_t$ is a $G$-Brownian motion.
	\end{Def}
	Following \cite{Peng_skrypt} and \cite{Denis_function_spaces}, we introduce the notation: for each $t\in[0,\infty)$
	\begin{enumerate}
		\item $\Omega_t:=\{w_{.\wedge t}\colon \omega\in\Omega\}$, $\F_t:=\mathcal{B}(\Omega_t)$, 
		\item $L^0(\Omega)\colon$ the space of all $\mathcal{B}(\Omega)$-measurable real functions,
		\item $L^0(\Omega_t)\colon$ the space of all $\mathcal{B}(\Omega_t)$-measurable real functions,
		\item $\lipt:=\lip \cap L^0(\Omega_t)$, $L^p_G(\Omega_t):=L^p_G(\Omega)\cap L^0(\Omega_t)$,
		\item $M^2_G(0,T)$ is the completion of the set of elementary processes of the form
		\[
			\eta(t)=\sum_{i=1}^{n-1}\xi_i\I_{[t_i,t_{i+1})}(s),
		\]
		where $0\leq t_1<t_2<\ldots<t_n\leq T,\ n\geq 1$ and $\xi_i\in Lip(\Omega_{t_i})$. The completion is taken under the norm
		\[
			\|\eta\|^2_{M^2_G(0,T)}:=\hE[\int_0^T|\eta(t)|^2ds].
		\]
	\end{enumerate}
			\begin{Def}\label{def_cond_exp}
		Let $X\in\lip$ have the representation
		\[
			X=\phi(B_{t_1},B_{t_2}-B_{t_1},\ldots,B_{t_n}-B_{t_{n-1}}),\quad \phi\in C_{b,Lip}(\R^{d\times n}),\ 0\leq t_1<\ldots<t_n<\infty.
		\]
		We define the conditional $G$-expectation under $\mathcal{F}_{t_j}$ as
		\[
			\hE[X|\F_{t_j}]:=\psi(B_{t_1},B_{t_2}-B_{t_1},\ldots,B_{t_j}-B_{t_{j-1}}),
		\]
		where
		\[
			\psi(x):=\hE[\phi(x,B_{t_{j+1}}-B_{t_{j}},\ldots,B_{t_n}-B_{t_{n-1}})].
		\]
	\end{Def}
	Similarly to the $G$-expectation,  the conditional $G$-expectation might be also extended to the sublinear operator $\hE[.|\F_t]\colon L^p_G(\Omega)\to L^p_G(\Omega_t)$ using the continuity argument. For more properties of the conditional G-expectation, see \cite{Peng_skrypt}.

	$G$-(conditional) expectation plays a crucial role in the stochastic calculus for $G$-Brownian motion. In \cite{Denis_function_spaces} it was shown that the analysis of the $G$-expectation might be embedded in the theory of upper-expectations and capacities. 
	\begin{theorem}[\cite{Denis_function_spaces}, Theorem 52 and 54]\label{tw_denis}
		Let $(\tilde{\Omega}, \G, \P_0)$ be a probability space carrying a standard $d$-dimensional Brownian motion $W$ with respect to its natural filtration $\mathbb{G}$. Let $\Theta$ be a representation set defined as in eq. (\ref{eq_g_exp_representation_theta}) and denote by $\atheta$ the set of all $\Theta$-valued $\mathbb{G}$-adapted processes on an interval $[0,\infty)$. For each $\theta\in\atheta$ define $\P^{\theta}$ as the law of a stochastic integral $\int_0^.\,\theta_sdW_s$ on the canonical space $\Omega=C_0(\R_{+},\R^d)$. We introduce the sets
		\begin{equation}\label{set_measure}
			\mathcal{P}_1:=\{\P^{\theta}\colon \theta \in\atheta\}, \quad\textrm{and}\quad\mathcal{P}:=\overline{\mathcal{P}_1},
		\end{equation}
where the closure is taken in the weak topology. $\mathcal{P}_1$ is tight, so $\mathcal{P}$ is weakly compact. Moreover, one has the representation
		\begin{equation}\label{eq_rep_g_exp_upper}
			\hE[X]=\sup_{\P\in\mathcal{P}_1}\, \E^{\P}[X]=\sup_{\P\in\mathcal{P}}\, \E^{\P}[X],\quad \textrm{for each }X\in L_G^1(\Omega).
		\end{equation}
	\end{theorem}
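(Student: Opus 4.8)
The statement recalls a result of \cite{Denis_function_spaces}; I sketch the strategy one would follow to establish it, splitting it into the tightness/compactness claim and the representation formula.

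\emph{Tightness and weak compactness.} First I would deduce tightness of $\mathcal{P}_1$ from the boundedness of $\Theta$. Set $M := \sup_{\gamma \in \Theta} |\gamma| < \infty$. For each $\theta \in \atheta$ the process $N^\theta_\cdot := \int_0^\cdot \theta_s\, dW_s$ is a continuous $\P_0$-martingale with quadratic variation $\langle N^\theta \rangle_t = \int_0^t \theta_s \theta_s^T\, ds \preceq M^2 t\, I_d$. By the Burkholder--Davis--Gundy inequality one obtains moment bounds $\E^{\P_0}\!\left[|N^\theta_t - N^\theta_s|^{2p}\right] \le C_p\, M^{2p}\, |t-s|^p$ with $C_p$ independent of $\theta$. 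Taking $p>1$, the Kolmogorov--Chentsov tightness criterion gives tightness of $\{\P^\theta\}_{\theta}$ on $C_0([0,T],\R^d)$ for every $T$; a projective/diagonal argument then yields tightness on the full path space $\Omega = C_0(\R_+,\R^d)$. Prokhorov's theorem makes $\mathcal{P}_1$ relatively weakly compact, so its weak closure $\mathcal{P}$ is weakly compact.

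\emph{The easy inequality and reduction.} Define the upper expectation $\bar{\E}[X] := \sup_{\P\in\mathcal{P}_1}\E^\P[X]$. For bounded continuous $X$ the map $\P \mapsto \E^\P[X]$ is weakly continuous, so the supremum over $\mathcal{P}_1$ equals the supremum over its closure $\mathcal{P}$; hence it suffices to identify $\bar{\E}$ with $\hE$. I would first prove the easy direction $\bar{\E}[X] \le \hE[X]$ on the generating class of cylindrical $X = \phi(B_{t_1}, B_{t_2}-B_{t_1}, \ldots, B_{t_n}-B_{t_{n-1}})$, $\phi \in C_{b,Lip}(\R^{d\times n})$: under $\P^\theta$ the increment $B_{t_i}-B_{t_{i-1}}$ has the law of $\int_{t_{i-1}}^{t_i}\theta_s\,dW_s$, which conditionally is centered with covariance $\int_{t_{i-1}}^{t_i}\theta_s\theta_s^T\,ds$ lying in the set generating $G$, so a conditioning argument and the sup-definition of the $G$-normal law give $\E^{\P^\theta}[X]\le\hE[X]$. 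This bound, together with sub-additivity, yields the $\|\cdot\|_1$-continuity of $\bar{\E}$ and so justifies extending every identity from $\lip$ to $L^1_G(\Omega)$ by density.

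\emph{The hard inequality and iteration.} The core is the single-increment identity. By the construction of $\hE$ through the $G$-normal distribution, $\hE[\psi(B_{t+h}-B_t)\,|\,\F_t]$ equals $u(h,0)$, where $u$ is the unique bounded viscosity solution of the $G$-heat equation $\partial_s u = G(D^2 u)$ with $u(0,\cdot)=\psi$. On the other hand, $\sup_{\theta}\E^{\P_0}[\psi(\int_t^{t+h}\theta_s\,dW_s)]$ is the value function of a stochastic control problem whose Hamilton--Jacobi--Bellman equation is precisely $\partial_s v = \tfrac12 \sup_{\gamma \in \Theta}\operatorname{tr}(\gamma\gamma^T D^2 v) = G(D^2 v)$ with the same initial datum. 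By the comparison principle for this fully nonlinear parabolic equation, $u=v$, which supplies the reverse inequality (and its conditional form) in one step. Iterating over the partition $0 \le t_1 < \cdots < t_n$, using the tower property of $\hE$ from Definition \ref{def_cond_exp} on one side and the dynamic programming principle for the control problem on the other, upgrades the identity to the full cylindrical case, and the density argument from the previous paragraph extends it to all $X \in L^1_G(\Omega)$.

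\emph{Main obstacle.} The delicate point is the matching of the two representations of the PDE solution: one must verify that the control value function is a continuous, suitably bounded viscosity solution of the $G$-heat equation and invoke a comparison principle for this possibly degenerate fully nonlinear operator, together with the construction of near-optimal controls $\theta$ realizing the supremum. A further difficulty is that the measures in $\mathcal{P}$ are mutually singular with no dominating reference measure, so the passage from bounded continuous functionals to $L^1_G(\Omega)$ requires that the capacity generated by $\hE$ and the $\mathcal{P}$-quasi-sure notions coincide; this identification, rather than the tightness or the reduction steps, is where the real work lies.
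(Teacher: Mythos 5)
The paper does not actually prove this statement: it is imported verbatim from \cite{Denis_function_spaces} (Theorems 52 and 54), so there is no internal proof to compare your sketch against. That said, your outline follows the same route as the cited source and is sound as a strategy: tightness of $\mathcal{P}_1$ from the boundedness of $\Theta$ via Burkholder--Davis--Gundy moment estimates and the Kolmogorov criterion, then Prokhorov; identification of the one-increment upper expectation with the $G$-normal distribution through the common PDE $\partial_t u = G(D^2u)$ (HJB equation of the control problem on one side, defining equation of the $G$-heat semigroup on the other); iteration over the partition; and a $\Vert\cdot\Vert_1$-density extension. Two refinements are worth recording. First, the iteration step is carried out in \cite{Denis_function_spaces} not by invoking an abstract dynamic programming principle but by verifying directly that under the upper expectation $\bar{\E}$ the canonical process has stationary, independent, $G$-normally distributed increments, i.e.\ is a $G$-Brownian motion; this is the precise form of your ``tower property plus DPP'' step, and it is also where the near-optimal controls $\theta$ (piecewise constant in time, measurable in the past increments) are actually constructed. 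Second, your closing paragraph overstates what is needed for \eqref{eq_rep_g_exp_upper}: the extension to $L^1_G(\Omega)$ only uses that $\lip$ is $\Vert\cdot\Vert_1$-dense and that $\E^{\P}\left[|X_n-X_m|\right]\leq\hE\left[|X_n-X_m|\right]$ uniformly in $\P$, so no identification of the capacity with quasi-sure notions is required at this stage (that identification is the content of the separate characterization theorem quoted just afterwards in the paper). Note also that since $G$ is assumed non-degenerate here, the $G$-heat equation has interior $C^{1,2}$ regularity by Krylov's results, so the matching of the two PDE representations can be done by a classical verification/supermartingale argument rather than a viscosity comparison principle.
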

For convenience we will always consider only a Brownian motion on the canonical space $\Omega$ with the Wiener measure $\P_0$. \newline
Similarly an analogous  representation holds for the $G$-conditional expectation.
	\begin{prop}[\cite{Soner_mart_rep}, Proposition 3.4]\label{prop_rep_conditional_G_exp}
		Let $\mathcal{P}(t,P):=\{\P'\in\mathcal{P}\colon \P=\P'\ \textrm{on}\ \F_t\}$. Then for any $X\in L^1_G(\Omega)$ one has 
		\begin{equation}\label{eq_cond_exp_rep}
			\hE[X|\F_t]=\sideset{}{^{\P}}\esup_{\P'\in\mathcal{P}(t,\P)}\, \E^{\P'}[X|\F_t],\ \P-a.s.
		\end{equation}
		\end{prop}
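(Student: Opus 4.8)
The plan is to identify the right-hand side, which I abbreviate as $\mathcal{E}_t[X]:=\sideset{}{^{\P}}\esup_{\P'\in\mathcal{P}(t,\P)}\E^{\P'}[X|\F_t]$, with Peng's conditional expectation $\hE[X|\F_t]$ by establishing the two inequalities separately and then passing from a dense subspace to all of $L^1_G(\Omega)$. First I would reduce to cylindrical $X\in\lip$ having $t$ among its breakpoints: both $\hE[\cdot|\F_t]$ and $\mathcal{E}_t[\cdot]$ are sublinear and $\|\cdot\|_1$-contractive maps into $L^1_G(\Omega_t)$ (for $\mathcal{E}_t$ this follows from $|\mathcal{E}_t[X]-\mathcal{E}_t[Y]|\le\mathcal{E}_t[|X-Y|]$ together with $\hE[\mathcal{E}_t[|X-Y|]]\le\hE[|X-Y|]$, the latter using $\hE=\sup_{\P}\E^{\P}$ and the tower property under each $\P'$), so an identity on the dense subspace $\lip$ extends automatically. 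A useful preliminary fact I would record is that the family $\{\E^{\P'}[X|\F_t]:\P'\in\mathcal{P}(t,\P)\}$ is upward directed: given $\P'_1,\P'_2\in\mathcal{P}(t,\P)$ and $A:=\{\E^{\P'_1}[X|\F_t]\ge\E^{\P'_2}[X|\F_t]\}\in\F_t$, pasting $\P'_1$ on $A$ and $\P'_2$ on $A^c$ after time $t$ yields a measure in $\mathcal{P}(t,\P)$ realizing the maximum, so that $\mathcal{E}_t[X]$ is the $\P$-a.s. increasing limit of a sequence $\E^{\P'_n}[X|\F_t]$.

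For the inequality $\mathcal{E}_t[X]\le\hE[X|\F_t]$, $\P$-a.s., I would prove the pointwise domination $\E^{\P'}[X|\F_t]\le\hE[X|\F_t]$, $\P'$-a.s., for each fixed $\P'\in\mathcal{P}(t,\P)$; since both sides are $\F_t$-measurable and $\P'=\P$ on $\F_t$, this also holds $\P$-a.s., and taking the essential supremum gives the claim. The domination itself I would obtain by disintegrating $\P'$ along $\F_t$: writing $\E^{\P'}[X|\F_t](\omega)=\E^{\P'_\omega}[X^{t,\omega}]$ in terms of a regular conditional probability $\P'_\omega$ and the shifted variable $X^{t,\omega}$, for $\P$-a.e.\ $\omega$ the measure $\P'_\omega$ is the law of a stochastic integral against a $\Theta$-valued control on $[t,\infty)$, hence one of the scenarios entering the supremum defining $\hE[X^{t,\omega}]$. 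Therefore $\E^{\P'_\omega}[X^{t,\omega}]\le\hE[X^{t,\omega}]$, and the latter equals $\hE[X|\F_t](\omega)$ by Peng's cylindrical definition (Definition \ref{def_cond_exp}).

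For the reverse inequality $\hE[X|\F_t]\le\mathcal{E}_t[X]$ I would argue by attainment. For cylindrical $X$ the value $\hE[X|\F_t](\omega)=\hE[X^{t,\omega}]=\sup_{\theta}\E^{\P^\theta}[X^{t,\omega}]$ is a supremum over $\Theta$-valued controls on $[t,\infty)$. The task is to select, measurably in $\omega$ and simultaneously, near-optimal future controls and to paste them onto a fixed $\P\in\mathcal{P}(t,\P)$ on $[0,t]$, producing a single $\P'\in\mathcal{P}(t,\P)$ with $\E^{\P'}[X|\F_t]\ge\hE[X|\F_t]-\varepsilon$, $\P$-a.s. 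Here I would invoke the stability of $\mathcal{P}=\overline{\mathcal{P}_1}$ under concatenation of controls (pasting a future control onto a past one again gives an element of $\atheta$, hence of $\mathcal{P}_1$, and the weak closure keeps us inside $\mathcal{P}$), a measurable-selection argument making the choice of near-optimal future control depend measurably on $\omega$, and the directedness recorded above. Letting $\varepsilon\downarrow 0$ yields $\hE[X|\F_t]\le\mathcal{E}_t[X]$.

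The main obstacle is the attainment step: verifying that $\mathcal{P}$ is genuinely closed under pasting of controls and carrying out the measurable selection of near-optimal future scenarios, all while respecting the weak closure that defines $\mathcal{P}$ from $\mathcal{P}_1$. The domination direction is comparatively routine once the regular-conditional-probability disintegration is in place, whereas the pasting-and-selection machinery is exactly where the mutual singularity of the measures and the precise structure of the $\Theta$-valued controls must be used; it is also the point at which one must confirm that the pasted measure lands in $\mathcal{P}$ itself rather than in some larger family.
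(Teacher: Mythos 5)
The paper contains no proof of this proposition: it is imported verbatim from the cited reference (\cite{Soner_mart_rep}, Proposition 3.4), so there is no in-paper argument to compare against. Judged on its own terms, your outline follows the same architecture as the source and its companion papers on quasi-sure analysis: reduction by density to cylindrical $X\in\lip$, a disintegration (r.c.p.d.) argument for the inequality $\leq$, and pasting plus measurable selection for the attainment direction, with upward directedness of $\{\E^{\P'}[X|\F_t]\}$ used to control the essential supremum. That is the right strategy, and your density and directedness reductions are sound as stated.

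As a proof, however, it is incomplete at precisely the points where the real work lies, and you acknowledge this yourself. Two gaps should be named concretely. First, the stability of $\mathcal{P}$ under $\F_t$-pasting and the measurable selection of $\varepsilon$-optimal continuation controls are not routine: one must verify that switching controls on an $\F_t$-measurable event produces an element of $\atheta$ whose law agrees with the given $\P$ on $\F_t$ and still lies in $\mathcal{P}$, and this is the substantive content of the aggregation machinery rather than something that can be invoked. Second, in the domination direction you assert that for $\P'\in\mathcal{P}(t,\P)$ the conditional law $\P'_\omega$ is again the law of a $\Theta$-controlled stochastic integral on $[t,\infty)$. This is already a nontrivial lemma for $\P'\in\mathcal{P}_1$, and for $\P'$ in the weak closure $\mathcal{P}=\overline{\mathcal{P}_1}$ it is not clear at all; the standard treatments either work from the outset with a family that is closed under taking r.c.p.d.'s and pastings, or prove separately that the essential supremum over $\mathcal{P}_1(t,\P)$ already equals that over $\mathcal{P}(t,\P)$. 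Without one of these devices your argument only yields $\E^{\P'}[X|\F_t]\leq\hE[X|\F_t]$ for $\P'\in\mathcal{P}_1(t,\P)$, which does not immediately bound the essential supremum over all of $\mathcal{P}(t,\P)$ as required by \eqref{eq_cond_exp_rep}.
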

\noindent We now introduce the Choquet capacity (see \cite{Denis_function_spaces}) related to $\mathcal{P}$
\[
	c(A):=\sup_{\P\in\mathcal{P}}\, \P(A),\quad A\in \mathcal{B}(\Omega).
\]
\begin{Def}
\begin{enumerate}
\item	A set $A$ is said to be polar, if $c(A)=0$. Let $\mathcal{N}$ be a collection of all polar sets. A property is said to hold quasi-surely (abbreviated to q.s.) if it holds outside a polar set. 
\item	We say that a random variable $Y$ is a version of $X$ if $X=Y$ q.s.	
\item	A random variable $X$ is said to be quasi-continuous (q.c. in short), if for every $\varepsilon>0$ there exists an open set $O$ such that $c(O)<\varepsilon$ and $X|_{O^c}$ is continuous.
\end{enumerate}
\end{Def}
\noindent We have the following characterization of spaces $L^p_G(\Omega)$. This characterization shows that $L^p_G(\Omega)$ is a rather small space.
\begin{theorem}[Theorem 18 and 25 and in \cite{Denis_function_spaces}]
	For each $p\geq1$ one has
	\[
		L^p_G(\Omega)=\{X\in L^0(\Omega)\colon X\textrm{ has a q.c. version and }\lim_{n\to\infty}\, \hE[|X|^p\I_{\{|X|>n\}}]=0\}.
	\]
\end{theorem}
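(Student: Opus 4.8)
The plan is to establish the two inclusions separately, and the workhorse for both directions is the Markov--Chebyshev inequality at the level of the capacity: for $X\in\lip$ and $\varepsilon>0$,
\[
	c(\{|X|>\varepsilon\})=\sup_{\P\in\mathcal{P}}\P(|X|>\varepsilon)\le\varepsilon^{-p}\sup_{\P\in\mathcal{P}}\E^{\P}[|X|^p]=\varepsilon^{-p}\hE[|X|^p]=\varepsilon^{-p}\|X\|_p^p,
\]
using the representation \eqref{eq_rep_g_exp_upper}. This converts $\|\cdot\|_p$-convergence into convergence in capacity, which is precisely the bridge between the analytic norm defining $\LGp$ and the topological notion of quasi-continuity.

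For the inclusion $\LGp\subseteq\{\ldots\}$, take $X\in\LGp$ and pick $X_k\in\lip$ with $\|X_k-X\|_p\to0$. Passing to a subsequence converging rapidly in $\|\cdot\|_p$, the capacity estimate above together with a Borel--Cantelli argument for the (only sub-additive) capacity $c$ shows that $X_k$ converges quasi-uniformly, i.e. uniformly off open sets of arbitrarily small capacity. Since each $X_k$ is a bounded Lipschitz cylinder functional and hence continuous on $\Omega$, the quasi-uniform limit $\tilde X$ is quasi-continuous and coincides with $X$ q.s., providing the required q.c. version. For the tail condition, it holds trivially for the bounded $X_k\in\lip$; to pass to the limit one uses $\|X\I_{\{|X|>n\}}\|_p\le\|X-X_k\|_p+\|X_k\|_{\infty}\,c(\{|X|>n\})^{1/p}$ and lets $n\to\infty$, exploiting $c(\{|X|>n\})\to0$, followed by $k\to\infty$.

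For the converse inclusion, assume $X$ is quasi-continuous and satisfies $\lim_n\hE[|X|^p\I_{\{|X|>n\}}]=0$. Truncation reduces to the bounded case: with $X_N:=(-N)\vee X\wedge N$ one has $\|X-X_N\|_p^p\le\hE[|X|^p\I_{\{|X|>N\}}]\to0$, so it suffices to show that every bounded q.c. $Y$ lies in $\LGp$. Given $\varepsilon>0$, quasi-continuity supplies an open $O$ with $c(O)<\varepsilon$ and $Y|_{O^c}$ continuous; a Tietze-type extension yields a bounded continuous $Z$ on $\Omega$ with $\|Z\|_{\infty}\le\|Y\|_{\infty}$ and $Z=Y$ on $O^c$, so that
\[
	\|Y-Z\|_p^p=\hE[|Y-Z|^p\I_O]\le(2\|Y\|_{\infty})^p\,c(O)<(2\|Y\|_{\infty})^p\,\varepsilon.
\]
It then remains to approximate a bounded continuous $Z$ by elements of $\lip$ in $\|\cdot\|_p$: invoking the tightness of $\mathcal{P}$ from Theorem \ref{tw_denis} one chooses a compact $K$ with $c(K^c)$ small, approximates $Z$ uniformly on $K$ by a Lipschitz cylinder functional using the finite-dimensional structure of $\Omega=C_0(\R_+,\R^d)$, and controls the contribution on $K^c$ by $\|Z\|_{\infty}$ and $c(K^c)$.

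The step I expect to be the main obstacle is twofold and both parts rest on the \emph{regularity/tightness} of $\mathcal{P}$ rather than on the norm alone: first, the quasi-uniform convergence in the forward direction, which requires a Borel--Cantelli lemma for the merely sub-additive capacity and the stability of quasi-continuity under such convergence; and second, the passage from bounded continuous functions to Lipschitz cylinder functionals in the backward direction, where tightness is genuinely needed to reduce to a compact set and thereby to finitely many coordinates. By contrast, the Tietze extension and the truncation are routine once the capacity estimates are in place; the conceptual content is that tightness and regularity of $\mathcal{P}$ are exactly what make ``quasi-continuous with vanishing tail'' coincide with the $\|\cdot\|_p$-closure of $\lip$.
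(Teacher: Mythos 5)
The paper does not prove this statement---it is quoted verbatim from Theorems 18 and 25 of \cite{Denis_function_spaces}---and your sketch reproduces essentially the argument given in that reference: the capacity Chebyshev bound plus a Borel--Cantelli argument yielding quasi-uniform convergence, hence a q.c.\ version and the vanishing-tail property in one direction, and truncation, Tietze extension, and a tightness-based reduction from bounded continuous functions to Lipschitz cylinder functionals in the other. The sketch is correct, and you rightly identify the two places where the real work lies, namely the quasi-uniform convergence for the merely subadditive capacity $c$ and the compactness step needed to pass from $C_b(\Omega)$ to $\lip$ in the $\|\cdot\|_p$-norm.
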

\noindent  The $G$-expectation turns out to be a good framework to develop stochastic calculus of the It\^o type.
We can have also $G$-SDE's and a version of the backward SDE's. As backward equations are a key tool to consider the maximum principle, we now give some short introduction to $G$-BSDE's and their properties (for simplicity in a one-dimensional case). 

Fix two functions $f,g:\Omega\times [0,T]\times\R\times\R\to \R$ and $\xi\in L^p_G(\Omega_T),\ p>2$. We will say that the triple $(p^G,q^G,K)$ is a solution of the $G$-BSDE with drivers $f,g$ and terminal condition $\xi$ if
\begin{align}
		dp^G(t)&=-f(t,p^G(t),q^G(t))dt-g(t,p^G(t),q^G(t))d\langle B\rangle (t)+q^GdB(t)+dK(t), \label{eq_adj}\\
	p^G(T)&=\xi,\nonumber
\end{align}
where $K$ is a non-increasing $G$-martingale starting at $0$. In \cite{Peng_bsde} 
the existence and uniqueness of such a $G$-BSDE are proved under some Lipschitz and regularity  conditions on the driver.

Furthermore under any $\P\in\mathcal{P}$ the process $p^G$  is a supersolution of a classical BSDE with drivers $f$ and $g$ and terminal condition $\xi$ on the probability space $(\Omega,\F,\P)$ (we will call such a BSDE a $\P$-BSDE). Hence, by comparison theorem for supersolutions and solutions we get
\[
	p^G(t)\geq p^{\P}(t)\quad \P-a.s.,
\]
where $p^{\P}$ is a solution of $\P$-BSDE. It might be also checked that $p^G$ is minimal in the  sense that
\[
	p^G(t)=\sideset{}{^{\P}}\esup_{\Q\in\mathcal{P}(t,\P)}\, p^{\Q}(t)\quad \P-a.s. ,
\]
see \cite{Soner_2bsde} for this representation. From now on we drop the superscript $G$ in the notation for $G$-BSDE's whenever this doesn't lead to  confusion.

\section{A sufficient maximum principle}\label{section:suff_maxprinciple}

Let $B(t)$ be a $G$-Brownian motion with associated sublinear expectation operator $\hE$. We consider controls $u$ taking values in a closed convex set $U\subset\R$. Let $X(t)=X^u(t)$ be a controlled process of the form

\begin{align}\label{eq_process}
dX(t)&= b(t,X(t),u(t)) dt + \mu(t,X(t),u(t)) d\langle B\rangle_t + \sigma(t,X(t),u(t)) dB(t); \ 0\leq t\leq T,\\
X(0)&=x\in\R.\nonumber
\end{align}
We assume that the coefficients $b,\ \mu,\ \sigma$ are Lipschitz continuous w.r.t. the space variable uniformly in $(t,u)$. Moreover, if the coefficients are not deterministic, they must belong to the space $M^2_G(0,T)$ for each $(x,u)\in \R\times U$ .

Let $f:[0,T]\times\R\times U\to \R$ and $g:\R\to\R$ be two measurable functions such that $f$ is continuous w.r.t the second variable and $g$ is a lower-bounded, differentiable function with quadratic growth s.t. there exists a constant $C>0$ and $\epsilon>0$ s.t
\[
	|g'(x)|<C(1+|x|)^{\frac{1}{1+\epsilon/2}}.
\] 
We let $\A$ denote the set of all admissible controls. For $u$ to be in  $\A$ we require that $u$ is quasi-continuous and adapted to $(\mathcal{F}_{(t-\delta)^+})_{t\geq \delta}$, where $\delta \geq 0$ is a given constant. This means that our control $u$ has only access to a delayed information flow. Moreover, we assume that for each $u\in\A$ the following integrability condition is satisfied
\[
	\hE\left[\int_0^Tf(t,X(t),u(t))dt\right]<\infty.
\]
Then for each $\P\in\mathcal{P}$, the performance functional associated to $u\in\A$ is assumed to be of the form
\begin{equation}
J^{\P}(u)=\E^{\P}\left[\int_0^T f(t,X(t),u(t)) dt +g(X(T))\right].
\end{equation}
We study the following strongly robust optimal control problem: find $\hat u\in \A$ such that

\begin{equation}\label{eq_sup}
\sup_{u\in \A}J^{\P}(u)= J^{\P}(\hat u)\quad \forall\ \P\in\mathcal{P},
\end{equation}
where the set $\mathcal{P}$ is introduced in \eqref{set_measure}.
To this end we define the Hamiltonian

\begin{equation}
H(t,x,u,p,q)= f(t,x,u) + \left[b(t,x,u) + \mu(t,x,u)\frac{d\langle B\rangle_t }{dt}\right] p
+\sigma(t,x,u) \frac{d\langle B\rangle_t }{dt} q,
\end{equation}
and  the associated $G$-BSDE with adjoint processes $p(t),q(t),K(t)$ by

\begin{align}\label{eq_p}
dp(t)&= -\frac{\partial H }{\partial x}(t) dt + q(t) dB(t) + dK(t);\ 0\leq t\leq T,\\
p(T)&=g'(X(T)).\nonumber
\end{align}
Note that the solution of such $G$-BSDE exists thanks to the assumption on the functions $f$ and $g$ and on the definition of the admissible control (see \cite{Peng_bsde} for details).

\begin{theorem}\label{theo_suff}
Let $\hat u\in \A$ with corresponding solution $\hat X(t), \hat p(t),\hat q(t), \hat K(t)$ of \eqref{eq_process} and \eqref{eq_p}
in \eqref{eq_p} such that $\hat K\equiv 0$. Assume that:

\begin{equation}
(x,u)\rightarrow H(t,x,u,\hat p(t),\hat q(t)) \ \textrm{and}\ x \rightarrow g(x) \
\textrm{are concave for all $t$ a.s.},
\end{equation}
\noindent and
\begin{equation}\label{eq_assumption_suff_max_princip}
\hE\left[\pm  \frac{\partial}{\partial u} H(t,\hat X(t),u,\hat p(t),\hat q(t))|_{u=\hat u(t)}    |\F_{(t-\delta)^{+}}\right]=0.
\end{equation}
for all $t$ q.s.
Then $\hat u=u$ is a strongly robust optimal control for the problem \eqref{eq_sup}.

\end{theorem}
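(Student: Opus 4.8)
The plan is to fix an arbitrary admissible control $u\in\A$ with state $X=X^{u}$ and an arbitrary $\P\in\mathcal P$, and to prove $J^{\P}(\hat u)-J^{\P}(u)\geq 0$; since $u$ and $\P$ are arbitrary, this is precisely the strongly robust optimality \eqref{eq_sup}. Writing $\tilde X(t):=\hat X(t)-X(t)$ and abbreviating $\hat H(t):=H(t,\hat X,\hat u,\hat p,\hat q)$ and $H(t):=H(t,X,u,\hat p,\hat q)$, I would first use the definition of the Hamiltonian to express the running cost as
\[
f(t,X,u)=H(t,X,u,\hat p,\hat q)-\Big(b(t,X,u)+\mu(t,X,u)\tfrac{d\langle B\rangle_t}{dt}\Big)\hat p(t)-\sigma(t,X,u)\tfrac{d\langle B\rangle_t}{dt}\,\hat q(t),
\]
together with the analogous identity at $(\hat X,\hat u)$, so that the difference of running costs becomes $\hat H(t)-H(t)$ plus terms linear in the coefficient differences $\hat b-b$, $\hat\mu-\mu$, $\hat\sigma-\sigma$. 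For the terminal cost I would use concavity of $g$ together with $\hat p(T)=g'(\hat X(T))$ to obtain the pointwise bound $g(\hat X(T))-g(X(T))\geq \hat p(T)\,\tilde X(T)$.

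The second step is to evaluate $\E^{\P}[\hat p(T)\tilde X(T)]$ by the classical It\^o product rule under $\P$, which is legitimate because under each $\P=\P^{\theta}$ the canonical process is a continuous semimartingale with $d\langle B\rangle_t=\theta_t\theta_t^{T}dt$. Since $\tilde X(0)=0$ and, crucially, $\hat K\equiv 0$, the adjoint process has the clean dynamics $d\hat p=-\frac{\partial H}{\partial x}(t,\hat X,\hat u,\hat p,\hat q)\,dt+\hat q\,dB$, with no $dK$ contribution. Taking $\P$-expectations the $dB$-integrals vanish (this is where the integrability hypotheses on the coefficients and on $g'$ are needed, to guarantee true rather than merely local martingales), and the resulting drift and $d\langle B\rangle$ terms cancel exactly against the coefficient-difference terms from the first step. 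What survives is the lower bound
\[
J^{\P}(\hat u)-J^{\P}(u)\;\geq\;\E^{\P}\!\left[\int_0^T\Big(\hat H(t)-H(t)-\tilde X(t)\,\tfrac{\partial H}{\partial x}(t,\hat X,\hat u,\hat p,\hat q)\Big)dt\right].
\]

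Next I would invoke the joint concavity of $(x,u)\mapsto H(t,x,u,\hat p,\hat q)$ to bound the integrand below by $-\frac{\partial H}{\partial u}(t,\hat X,\hat u,\hat p,\hat q)\,(u(t)-\hat u(t))$, reducing the claim to
\[
\E^{\P}\!\left[\int_0^T \tfrac{\partial H}{\partial u}(t,\hat X(t),\hat u(t),\hat p(t),\hat q(t))\,(u(t)-\hat u(t))\,dt\right]=0.
\]
This is where the $G$-structure enters decisively. By Proposition \ref{prop_rep_conditional_G_exp} the conditional $G$-expectation $\hE[\,\cdot\mid\F_{(t-\delta)^+}]$ dominates $\E^{\P}[\,\cdot\mid\F_{(t-\delta)^+}]$ $\P$-a.s., since $\P\in\mathcal P((t-\delta)^+,\P)$. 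Applying this to $+\tfrac{\partial H}{\partial u}$ and to $-\tfrac{\partial H}{\partial u}$ and using assumption \eqref{eq_assumption_suff_max_princip} (which holds q.s., hence $\P$-a.s., as polar sets are $\P$-null) yields $\E^{\P}[\tfrac{\partial H}{\partial u}(t,\hat X,\hat u,\hat p,\hat q)\mid\F_{(t-\delta)^+}]=0$ $\P$-a.s., for every $\P\in\mathcal P$. Since both $u$ and $\hat u$ are adapted to the delayed filtration $(\F_{(t-\delta)^+})$, the factor $u(t)-\hat u(t)$ is $\F_{(t-\delta)^+}$-measurable and can be pulled out of the conditional expectation, so each integrand has vanishing $\P$-expectation and Fubini gives the required identity.

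The main obstacle, and the conceptual core of the argument, is exactly this translation from the two-sided sublinear condition $\hE[\pm\,\tfrac{\partial H}{\partial u}\mid\F_{(t-\delta)^+}]=0$ into the family of linear conditions $\E^{\P}[\tfrac{\partial H}{\partial u}\mid\F_{(t-\delta)^+}]=0$ holding simultaneously for \emph{all} $\P$: it is what upgrades worst-case optimality to optimality under every $\P$, and it rests on the representation of the conditional $G$-expectation as an essential supremum over $\mathcal P(t,\P)$. The remaining difficulties are purely technical, namely verifying the integrability required for the It\^o product rule and for the vanishing of the $dB$-martingale parts under each $\P$; and the necessity of the hypothesis $\hat K\equiv 0$ is already visible in the second step, since a non-trivial non-increasing $\hat K$ would leave an uncontrolled term $\E^{\P}[\int_0^T\tilde X\,d\hat K]$ of indefinite sign, which is precisely the obstruction explored by the counter-example of Section 6.
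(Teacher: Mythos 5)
Your proposal is correct, and its skeleton (express the running-cost difference through the Hamiltonian, use concavity of $g$ and the terminal condition $\hat p(T)=g'(\hat X(T))$, apply the It\^o product rule to $\hat p\,(X-\hat X)$, cancel the coefficient terms, and invoke concavity of $H$ to reduce everything to the first-order condition in $u$) is exactly that of the paper. Where you genuinely diverge is in the treatment of the expectation. The paper never fixes $\P$: it writes $\sup_{\P}\{J^{\P}(u)-J^{\P}(\hat u)\}=\hE[I_1+I_2]$ and carries the sublinear expectation through every estimate, so that in the last step it must pull the $\F_{(t-\delta)^+}$-measurable factor $u(t)-\hat u(t)$ out of a \emph{sublinear} conditional expectation, which forces the decomposition into $(u-\hat u)^{+}$ and $(u-\hat u)^{-}$ weighted by $\hE[\pm\frac{\partial \hat H}{\partial u}(t)\mid\F_{(t-\delta)^+}]$ respectively; this is precisely where the two-sided hypothesis \eqref{eq_assumption_suff_max_princip} enters. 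You instead fix $\P$, run the whole argument under the classical linear expectation $\E^{\P}$, and transfer the hypothesis via Proposition \ref{prop_rep_conditional_G_exp}: since $\P\in\mathcal P((t-\delta)^+,\P)$, the bounds $\E^{\P}[\pm\frac{\partial\hat H}{\partial u}(t)\mid\F_{(t-\delta)^+}]\leq\hE[\pm\frac{\partial\hat H}{\partial u}(t)\mid\F_{(t-\delta)^+}]=0$ give $\E^{\P}[\frac{\partial\hat H}{\partial u}(t)\mid\F_{(t-\delta)^+}]=0$ $\P$-a.s., after which the measurable factor pulls out with no sign decomposition. The two routes use the same hypothesis in equivalent ways; yours buys a cleaner endgame (classical conditional expectations, no sign splitting, the martingale property of the $dB$-integrals stated under each $\P$ rather than as vanishing $G$-expectation of a symmetric $G$-It\^o integral), at the cost of invoking the essential-supremum representation of $\hE[\cdot\mid\F_t]$, while the paper's version stays entirely inside the $G$-calculus. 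Your closing observation that a nonzero $\hat K$ would leave the sign-indefinite term $\E^{\P}[\int_0^T(X-\hat X)\,d\hat K]$ is also the correct diagnosis of why $\hat K\equiv 0$ cannot be dropped, consistent with the counterexample of Section 6.
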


\begin{proof}
For the sake of simplicity , in the sequel we  adopt the concise notation $f(t):=f(t,X^u(t),u(t))$, $\hat f(t)=f(t,X^{\hat u}(t),\hat u(t))$, $X(T)=X^u(T)$, $\hat X(T)=X^{\hat u}(T)$.
Let $u\in\A$ be arbitrary and consider

\begin{align}
\sup_{\P\in\mathcal P} \{J^{\P}(u)-J^{\P}(\hat u)\}&= \sup_{\P\in\mathcal P}\, \E^{\P}\left[\int_0^T( f(t) -\hat f(t))dt +g(X(T))-g(\hat X(T))\right] \nonumber \\ 
&=\hE\left[\int_0^T( f(t) -\hat f(t))dt +g(X(T))-g(\hat X(T))\right] \nonumber \\ 
& = \hE[ I_1 +I_2], \label{eq_12}
\end{align}
where  $J$ is introduced in \eqref{functional} and

\begin{equation*}
I_1:=\int_0^T( f(t) -\hat f(t))dt,\quad I_2:=g(X(T))-g(\hat X(T)).
\end{equation*}
By definition of $H$ we can write
\small{
\begin{equation}
I_1=\int_0^T \left\{H(t) -\hat H(t)-\left[b(t) -\hat b(t) + (\mu(t) -\hat \mu(t))\frac{d\langle B\rangle_t }{dt}\right]\hat p(t)
 - [\sigma(t) -\hat \sigma(t)]\frac{d\langle B\rangle_t }{dt} \hat q(t)\right\} dt. \label{eq_sigma1}
\end{equation}
}
By concavity of $g$, \eqref{eq_p}  and the It\^o formula we have

\begin{align}
I_2&\leq g'(\hat X(T))(X(T)-\hat X(T))=\hat p(T)(X(T)-\hat X(T)) \nonumber \\ 
&= \int_0^T\hat p(t)d(X(t)-\hat X(t)) + \int_0^T(X(t)-\hat X(t)) d\hat p(t) + \int_0^T d\langle \hat p, X-\hat X\rangle(t)\nonumber  \\ 
&= \int_0^T\hat p(t) [b(t) -\hat b(t) + (\mu(t) -\hat \mu(t))\frac{d\langle B\rangle_t }{dt}] dt\nonumber \\ 
& \quad + \int_0^T(X(t)-\hat X(t)) (-\frac{\partial \hat H }{\partial x}(t)) dt + \int_0^T[\sigma(t) -\hat \sigma(t)]\frac{d\langle B\rangle_t }{dt} \hat q(t) dt\\
& \quad + \int_0^T\hat p(t)[\sigma(t)-\hat \sigma(t)]dB(t)+\int_0^T[X(t)-\hat X(t)]\hat q(t)dB(t) \,.\label{eq_sigma2}
\end{align}
Adding \eqref{eq_sigma1} and  \eqref{eq_sigma2} and using concavity of $H$ we get,
by  the sublinearity of the $G$-expectation and by \eqref{eq_12}, that

\begin{align*}
\sup_{\P\in\mathcal P} \{J^{\P}(u)-J^{\P}(\hat u)\}&\leq \hE\left[\int_0^T\left(\hat p(t)[\sigma(t)-\hat \sigma(t)]+[X(t)-\hat X(t)]\hat q(t)\right)dB(t)\right] \\
&\quad + \hE\left[\int_0^T  [H(t) -\hat H(t) -\frac{\partial \hat H }{\partial x}(t) (X(t)-\hat X(t))] dt \right]\\
&\leq \hE\left[\int_0^T \frac{\partial \hat H }{\partial u}(t)(u(t)-\hat u(t)) dt \right] \\
&\leq \int_0^T\hE\left[ \frac{\partial \hat H }{\partial u}(t)(u(t)-\hat u(t)) \right] dt \\
&\leq \int_0^T\hE\left[ \hE\left[\frac{\partial \hat H }{\partial u}(t)(u(t)-\hat u(t)) | \mathcal{F}_{(t-\delta)^{+}}\right]\right] dt\\
&\leq \int_0^T\hE\left[ \hE\left[\frac{\partial \hat H }{\partial u}(t)| \mathcal{F}_{(t-\delta)^{+}}\right](u(t)-\hat u(t))^{+}\right.\\
&+\left.\hE\left[-\frac{\partial \hat H }{\partial u}(t)| \mathcal{F}_{(t-\delta)^{+}}\right](u(t)-\hat u(t))^{-}\right] dt= 0,
\end{align*}
since $u=\hat u$ is a critical point of the Hamiltonian. This proves that $\hat u:=\hat u$ is optimal.
\end{proof}

\begin{rem}
	Note that if $\delta=0$ we can relax slightly the assumption in eq. \eqref{eq_assumption_suff_max_princip} by just requiring that
	\[
		\max_{v\in U}H(t,\hat X(t),v,\hat p(t),\hat q(t))]=  H(t,\hat X(t),\hat u(t),\hat p(t),\hat q(t)).
	\]
\end{rem}

\section{A necessary maximum principle for full-information case}\label{section:nec_maxprinciple}

It is a drawback of the previous result that the concavity conditions are not satisfied in many applications. Therefore it is of interest to have a maximum principle, which does not need this condition. Moreover, the requirement that the non-increasing $G$-martingale $\hat{K}$ disappears from the adjoint equation for the optimal control $\hat{u}$ is a very strong assumption, which is however crucial in the proof. In this section we prove a result which doesn't depend on the concavity of the Hamiltonian. Moreover, in the Merton problem we show that the necessary maximum principle might be obtained without the assumption on the process $\hat K$. We make the following assumptions:

\begin{enumerate}
\item[A1.] for all $u,\beta\in\A$ with $\beta$ bounded, there exists $\delta >0$ such that 
\begin{equation*}
u+a\beta\in \A , \quad \textrm{for all} \ a\in (-\delta, \delta).
\end{equation*}

\item[A2.] For all $t,h$ such that $0\leq t < t+h\leq T$ and all bounded random variables $\alpha\in L^1_G(\Omega_t)$\footnote{It is easy to see that for a fixed $\P\in\mathcal{P}$ the set of all bounded random variables from the space $L^1_G(\Omega)$ is dense in the space $L^p_{\P}(\Omega_t)$ under the norm $(\E^{\P}[|.|^p])^{1/p}$ for any $p\geq1$.}, the control 
 \begin{equation*}
\beta(s):= \alpha \I_{[t,t+h]} (s)
\end{equation*}
belongs to  $\A$.

\item[A3.] Given $u,\beta\in\A$ with $\beta$ bounded, the derivative process 
 \begin{equation*}
Y(t):=\frac{d}{da} X^{u+\alpha\beta}(t)
\end{equation*}
exists, $Y(0)=0$ and
\begin{align*}
dY(t)&= \left\{ \frac{\partial b}{\partial x}(t) Y(t) + \frac{\partial b}{\partial u}(t) \beta(t)\right\} dt \\
&\quad +  \left\{ \frac{\partial \mu}{\partial x}(t) Y(t) + \frac{\partial \mu}{\partial u}(t) \beta(t)\right\} d\langle B\rangle_t
+ \left\{ \frac{\partial \sigma}{\partial x}(t) Y(t) + \frac{\partial \sigma}{\partial u}(t) \beta(t)\right\} dB(t)\,.
\end{align*}
\end{enumerate}

\begin{lemma}\label{lemma_necessMax}
Assume that A1, A2, A3 hold and that $\hat u$ is an optimal control for the performance functional
\begin{equation*}
u\rightarrow J^{\P}(u)
\end{equation*}
for some probability measure $\P\in\mathcal{P}$. Consider the adjoint equation as a BSDE under probability measure $\P$:
\begin{align}\label{eq_p_under_P}
dp^{\P}(t)&= -\frac{\partial H }{\partial x}(t,X(t),p^{\P}(t), q^{\P}(t)) dt + q^{\P}(t) dB(t);\ 0\leq t\leq T,\\
p^{\P}(T)&=g'(X(T))\quad \P-a.s.\nonumber
\end{align}
Then
 \begin{equation*}
\frac{\partial \hat H^{\P}}{\partial u}(t):=\frac{\partial}{\partial u} H(t,\hat X(t), u, \hat p^{\P}(t),\hat q^{\P}(t))\,|_{u=\hat u(t)}=0.
\end{equation*}
\end{lemma}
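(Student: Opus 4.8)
The plan is to treat this as the classical stochastic maximum principle, exploiting the fact that once $\P\in\mathcal P$ is fixed the adjoint equation \eqref{eq_p_under_P} is an ordinary It\^o BSDE on $(\Omega,\F,\P)$, so the standard It\^o calculus under $\P$ is available. First I would encode optimality of $\hat u$ as a vanishing G\^ateaux derivative: by A1 the perturbed control $\hat u+a\beta$ is admissible for $a\in(-\delta,\delta)$, and since $a\mapsto J^{\P}(\hat u+a\beta)$ is maximized at $a=0$ its derivative there vanishes. Differentiating under $\E^{\P}$, using the derivative process $Y$ of A3 and the terminal identity $g'(\hat X(T))=\hat p^{\P}(T)$, this reads
\[
0=\E^{\P}\!\left[\int_0^T\!\Big(\frac{\partial f}{\partial x}(t)Y(t)+\frac{\partial f}{\partial u}(t)\beta(t)\Big)\,dt+\hat p^{\P}(T)Y(T)\right].
\]

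Next I would rewrite the terminal term $\E^{\P}[\hat p^{\P}(T)Y(T)]$ by applying the It\^o product rule to $\hat p^{\P}(t)Y(t)$ under $\P$, combining the dynamics of $Y$ from A3 with those of $\hat p^{\P}$ from \eqref{eq_p_under_P}. The cross-variation contributes $\hat q^{\P}(t)\{\frac{\partial\sigma}{\partial x}(t)Y(t)+\frac{\partial\sigma}{\partial u}(t)\beta(t)\}\,d\langle B\rangle_t$, which is well defined since under $\P$ the quadratic variation $\langle B\rangle$ is absolutely continuous. Substituting the explicit form of $\frac{\partial H}{\partial x}$ read off from the definition of $H$, all terms proportional to $Y$ cancel and the $dB$ stochastic integrals vanish in $\P$-expectation; what remains combines with the optimality identity above so that the $Y$-contributions drop out entirely, leaving
\[
0=\E^{\P}\!\left[\int_0^T\frac{\partial\hat H^{\P}}{\partial u}(t)\,\beta(t)\,dt\right]
\]
for every bounded $\beta\in\A$.

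Finally I would localize. Taking the special perturbation $\beta(s)=\alpha\,\I_{[t,t+h]}(s)$ permitted by A2, dividing by $h$ and letting $h\downarrow 0$ (Lebesgue differentiation) yields $\E^{\P}[\alpha\,\frac{\partial\hat H^{\P}}{\partial u}(t)]=0$ for a.e. $t$ and every bounded $\alpha\in L^1_G(\Omega_t)$. Since in the full-information setting $\frac{\partial\hat H^{\P}}{\partial u}(t)$ is $\F_t$-measurable, and since by the footnote to A2 such $\alpha$ are dense in $L^q_{\P}(\Omega_t)$, choosing $\alpha$ to be a truncation of the sign of $\frac{\partial\hat H^{\P}}{\partial u}(t)$ forces $\frac{\partial\hat H^{\P}}{\partial u}(t)=0$ $\P$-a.s., which is the assertion.

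I expect the main obstacle to be analytic rather than algebraic. The delicate points are justifying the interchange of $\frac{d}{da}$ with $\E^{\P}[\int_0^T\cdot\,dt]$ and with the evaluation at $T$, which relies on the existence and integrability of the derivative process $Y$ granted by A3 together with a dominated-convergence argument exploiting the quadratic-growth bound on $g'$ and the uniform Lipschitz bounds on $b,\mu,\sigma$; and, in parallel, securing enough integrability for the $dB$ integrals produced by the It\^o expansion to be genuine $\P$-martingales of zero mean. Once these integrability issues are settled, the It\^o computation and the cancellation of the $Y$-terms are routine.
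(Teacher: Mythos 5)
Your proposal is correct and follows essentially the same route as the paper: differentiate $a\mapsto J^{\P}(\hat u+a\beta)$ at $a=0$ using the derivative process $Y$ from A3, apply the It\^o product rule to $\hat p^{\P}(t)Y(t)$ under the fixed $\P$ so that the $Y$-terms cancel against $\partial H/\partial x$, and then localize via the perturbations of A2 and the denseness remark to force $\partial\hat H^{\P}/\partial u=0$ $\P$-a.s. The only cosmetic difference is that the paper keeps an intermediate inequality and recovers equality by testing with $\pm\beta$, whereas you assert the equality directly, which is equally valid here since the $\P$-adjoint equation is a genuine BSDE with no $K$-term.
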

\begin{proof}
Consider
\small{
\begin{align}
\frac{d}{d a}J^{\P}(u+a\beta)&=  \frac{d}{d a} \E^{\P}\left[\int_0^T f(t,X^{u+a\beta}(t),u(t)) dt +g(X^{u+a\beta}(T))\right] \nonumber\\
&= \lim_{a\rightarrow 0} \frac{1}{a} \E^{\P}\left[\int_0^T f(t,X^{u+a\beta}(t),u(t)) dt +g(X^{u+a\beta}(T))\right]\nonumber \\
&\quad \quad -\E^{\P}\left[\int_0^T f(t,X(t),u(t)) dt +g(X(T))\right]\nonumber\\
&=  \lim_{a\rightarrow 0}  \E^{\P}\left[\int_0^T \frac{1}{a}\left\{f(t,X^{u+a\beta}(t),u(t)) -f(t,X(t),u(t))\right\}dt + \frac{1}{a}\left\{g(X^{u+a\beta}(T))-g(X(T))\right\}\right]\nonumber\\
&=\E^{\P}\left[\int_0^T \left(\frac{\partial f}{\partial x}(t,X(t),u(t))Y(t) + \frac{\partial f}{\partial u}(t,X(t),u(t))\beta(t)\right)dt +
g'(X(T))Y(T) \right]\,. \label{eq_der}
\end{align}
}
\noindent By the It\^o formula
\small{
\begin{align}
 &\E^{\P}\left[g'(X(T))Y(T)\right] =\E^{\P}\left[p(T)Y(T)\right]\nonumber\\
&= \E^{\P}\left[\int_0^T p^{\P}(t) dY(t) +\int_0^T Y(t) dp^{\P}(t) + \int_0^T q^{\P}(t)\left\{ \frac{\partial \sigma}{\partial x}(t) Y(t) + \frac{\partial \sigma}{\partial u}(t) \beta(t)\right\} d\langle B\rangle_t \right]\nonumber\\
&\leq   \E^{\P}\left[\int_0^T p^{\P}(t)\left\{\frac{\partial b}{\partial x}(t) Y(t) + \frac{\partial b}{\partial u}(t) \beta(t)\right\} dt +  \int_0^T p^{\P}(t) \left\{ \frac{\partial \mu}{\partial x}(t) Y(t) + \frac{\partial \mu}{\partial u}(t) \beta(t)\right\} d\langle B\rangle_t \right. \nonumber\\
& \quad \left. 
 +\int_0^T Y(t)(-\frac{\partial \hat H^{\P}}{\partial x}(t)) dt  + \int_0^T q^{\P}(t)\left\{ Y(t)\frac{\partial \sigma}{\partial x}(t) +\frac{\partial \sigma}{\partial u}(t) \beta(t)\right\} d\langle B\rangle_t \right]\nonumber\\
&=\E^{\P}\left[ \int_0^T Y(t) \left\{ p^{\P}(t)\left(\frac{\partial b}{\partial x}(t) +  \frac{\partial \mu}{\partial x}(t) \frac{d\langle B\rangle_t
}{dt}\right) + q^{\P}(t) \frac{\partial \sigma}{\partial x}(t) - \frac{\partial H^{\P}}{\partial x}(t)\right\}dt \right. \nonumber\\
& \quad \left. 
+ \int_0^T \beta(t)\left\{p^{\P}(t)\left(\frac{\partial b}{\partial u}(t) +  \frac{\partial \mu}{\partial u}(t) \frac{d\langle B\rangle_t
}{dt}\right) + q^{\P}(t) \frac{\partial \sigma}{\partial u}(t) \frac{d\langle B\rangle_t
}{dt} \right\} dt  \right]\,.\label{eq_der2}
\end{align}
}
Adding \eqref{eq_der} and  \eqref{eq_der2} we get
\begin{equation*}
\frac{d}{d a}J^{\P}(u+a\beta)\leq  \E^{\P}\left[ \int_0^T \beta(t)\frac{\partial H^{\P}}{\partial u}(t) dt\right]\,.
\end{equation*}
If $\hat u$ is an optimal control, then the above gives
\begin{equation*}
0=\frac{d}{d a}J^{\P}(\hat u+a\beta)\leq  \E^{\P}\left[ \int_0^T \beta(t)\frac{\partial \hat H^{\P}}{\partial u}(t) dt\right]
\end{equation*}
for all bounded $\beta\in \A$. Applying this to both $\beta$ and $-\beta$, we conclude that
\begin{equation*}
\E^{\P}\left[ \int_0^T \beta(t)\frac{\partial \hat H}{\partial u}(t) dt\right]=0.
\end{equation*}
By A2 together with the footnote about the denseness we can then proceed to deduce that
\begin{equation*}
\frac{\partial \hat H^{\P}}{\partial u}(t)=0\quad \P-a.s.
\end{equation*}
\end{proof}

Using the lemma we can easily get the following necessary maximum principle.
\begin{theorem}
	Assume that A1, A2, A3 hold and that $\hat u$ is a strongly robust optimal control for the performance functional
	\begin{equation*}
	u\rightarrow J^{\P}(u)
	\end{equation*}
	for every probability measure $\P\in\mathcal{P}$. Consider the adjoint equation as a $G$-BSDE:
	\begin{align}\label{eq_p_under_G}
		d\hat p^{G}(t)&= -\frac{\partial H }{\partial x}(t,X(t),\hat p^{G}(t), \hat q^{G}(t)) dt + \hat q^{G}(t) dB(t)+d\hat K(t);\ 0\leq t\leq T,\\
		\hat p^{G}(T)&=g'(X(T))\quad q.s.\nonumber
	\end{align}
If $\hat K\equiv 0\ q.s.$ then
 \begin{equation}\label{eq_necessary_max_hamilton_G}
\frac{\partial \hat H^{G}}{\partial u}(t):=\frac{\partial}{\partial u} H(t,\hat X(t), u, \hat p^{G}(t),\hat q^{G}(t))\,|_{u=\hat u(t)}=0,\ q.s.
\end{equation}
\end{theorem}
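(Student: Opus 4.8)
The plan is to reduce the quasi-sure statement to the measure-by-measure statement already established in Lemma~\ref{lemma_necessMax}, the only genuinely new ingredient being the identification of the $G$-adjoint process with the family of $\P$-adjoint processes under the hypothesis $\hat K\equiv 0$. First, since $\hat u$ is \emph{strongly robust} optimal, it is in particular optimal for $J^{\P}$ for every fixed $\P\in\mathcal{P}$. Hence for each such $\P$ the hypotheses A1, A2, A3 of Lemma~\ref{lemma_necessMax} are in force, and I obtain
\[
\frac{\partial}{\partial u}H\bigl(t,\hat X(t),u,\hat p^{\P}(t),\hat q^{\P}(t)\bigr)\big|_{u=\hat u(t)}=0,\qquad \P\text{-a.s.},
\]
where $(\hat p^{\P},\hat q^{\P})$ is the solution of the $\P$-BSDE \eqref{eq_p_under_P}.

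The crucial step is to show that, when $\hat K\equiv 0$ q.s., the $G$-adjoint process coincides with the $\P$-adjoint process for each $\P$, i.e.\ $\hat p^{G}=\hat p^{\P}$ and $\hat q^{G}=\hat q^{\P}$ $\P$-a.s. Indeed, with $\hat K\equiv 0$ the $G$-BSDE \eqref{eq_p_under_G} reduces to $d\hat p^{G}(t)=-\tfrac{\partial H}{\partial x}(t)\,dt+\hat q^{G}(t)\,dB(t)$; fixing $\P\in\mathcal{P}$ and using that under $\P$ the quadratic variation is absolutely continuous with density $\theta_t\theta_t^{T}$, so that $d\langle B\rangle_t=\theta_t\theta_t^{T}\,dt$, this is precisely a classical BSDE under $\P$ with the same driver and terminal condition $g'(\hat X(T))$ as \eqref{eq_p_under_P}. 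As recalled in Section~\ref{section:preliminaries}, in general $\hat p^{G}$ is only a \emph{supersolution} of the $\P$-BSDE, the non-increasing $G$-martingale $\hat K$ being exactly the gap term; the vanishing of $\hat K$ turns it into an honest solution, and the $dB$-integrand $\hat q^{G}$ coming from the $G$-martingale representation is then the $\P$-BSDE integrand. By uniqueness of the $\P$-BSDE one concludes $(\hat p^{G},\hat q^{G})=(\hat p^{\P},\hat q^{\P})$ $\P$-a.s. Substituting into the Hamiltonian derivative, which depends on the adjoint pair only pointwise, Lemma~\ref{lemma_necessMax} yields
\[
\frac{\partial \hat H^{G}}{\partial u}(t)=\frac{\partial \hat H^{\P}}{\partial u}(t)=0,\qquad \P\text{-a.s.}
\]

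Finally I upgrade this to a quasi-sure statement via the capacity $c$. For (a.e.)\ fixed $t$ the random variable $\omega\mapsto \tfrac{\partial \hat H^{G}}{\partial u}(t,\omega)$ is defined through the $G$-BSDE solution alone and does not depend on $\P$, so the exceptional set $N_t:=\{\omega:\tfrac{\partial \hat H^{G}}{\partial u}(t,\omega)\neq 0\}$ is a single $\mathcal{B}(\Omega)$-measurable set satisfying $\P(N_t)=0$ for every $\P\in\mathcal{P}$; hence $c(N_t)=\sup_{\P\in\mathcal{P}}\P(N_t)=0$, i.e.\ $N_t$ is polar and \eqref{eq_necessary_max_hamilton_G} holds q.s. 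The main obstacle is the identification step of the second paragraph: one must argue carefully that $\hat K\equiv 0$ converts the $G$-supersolution into a genuine $\P$-solution simultaneously for all $\P$, so that BSDE uniqueness applies and the representation $\hat q^{G}=\hat q^{\P}$ of the diffusion coefficient is legitimate; the passage from ``$\P$-a.s.\ for all $\P$'' to ``q.s.'' is then immediate from the definition of $c$.
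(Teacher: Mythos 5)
Your proposal is correct and follows essentially the same route as the paper: fix $\P$, use $\hat K\equiv 0$ together with uniqueness of the $\P$-BSDE to identify $(\hat p^{G},\hat q^{G})$ with $(\hat p^{\P},\hat q^{\P})$ $\P$-a.s., invoke Lemma \ref{lemma_necessMax} to conclude the criticality of $\hat u$ under each $\P$, and pass to the quasi-sure statement by the arbitrariness of $\P$. Your write-up merely spells out in more detail the supersolution-versus-solution identification and the capacity argument that the paper leaves implicit.
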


\begin{proof}
We now prove that the relation in \eqref{eq_necessary_max_hamilton_G} holds for every $\P\in\mathcal{P}$. 
Fix $\P\in\mathcal{P}$. If $\hat K\equiv 0\ q.s.$ then by the uniqueness of the solution of $\P$-BSDE we get that $\hat p^G\equiv \hat p^{\P}\ \P-a.s.$ and $\hat q^G\equiv \hat q^{\P}\ \P-a.s$. But by Lemma \ref{lemma_necessMax} we know that $\hat u$ is a $\P-a.s.$ critical point of $\hat H^{\P}(t)$ hence also $\hat H^G(t)$. By the arbitrariness of $\P\in\mathcal{P}$ we get the assertion of the theorem.
\end{proof}

Just as we mentioned at the beginning of this section, the assumption on the process $\hat K$ is a big disadvantage. However, if we limit our considerations to the Merton-type problem, we are able to show the necessary maximum principle without this assumption.
\begin{theorem}\label{theorem_necessMax_Merton}
	Assume that
	\begin{enumerate} 
	\item A1, A2, A3 hold. 
	\item $b\equiv 0$, $\mu(t,x,u)=x\cdot u\cdot m(t)$ and $\sigma(t,x,u)=x\cdot u\cdot s(t)$ for some bounded functions $m$ and $s$ such that for each $t\in[0,T]$ $m(t)$ and $s(t)$ are quasi-continuous. Moreover, let $c(s(t)=0)=0$ for all $t\in[0,t]$.
	\item $f\equiv 0$.
	\end{enumerate}
	 Let $\hat u$ is a strongly robust optimal control for the performance functional
	\begin{equation*}
	u\rightarrow J^{\P}(u)
	\end{equation*}
	for every probability measure $\P\in\mathcal{P}$. Then
	 \begin{equation}\label{eq_necessary_max_hamilton_G2}
		\frac{\partial \hat H^{G}}{\partial u}(t):=\frac{\partial}{\partial u} H(t,\hat X(t), u, \hat p^{G}(t),\hat q^{G}(t))=0,\ q.s.
	\end{equation}
\end{theorem}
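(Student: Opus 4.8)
The plan is to reduce the assertion to the single statement $m(t)\hat p^{G}(t)+s(t)\hat q^{G}(t)=0$ q.s., and to reach it by first solving the problem under each fixed measure and then aggregating. In the Merton setting the Hamiltonian is $H(t,x,u,p,q)=\frac{d\langle B\rangle_t}{dt}\,xu\,(m(t)p+s(t)q)$, so that $\frac{\partial H}{\partial u}=\frac{d\langle B\rangle_t}{dt}\,x\,(m(t)p+s(t)q)$ and $\frac{\partial H}{\partial x}=\frac{d\langle B\rangle_t}{dt}\,u\,(m(t)p+s(t)q)$ share the same bracket. Since $\hat X$ solves a geometric $G$-SDE it stays strictly positive q.s., and $\frac{d\langle B\rangle_t}{dt}$ is bounded below by a positive constant by non-degeneracy of $G$; hence $\frac{\partial\hat H^{G}}{\partial u}(t)=0$ is equivalent to $m(t)\hat p^{G}(t)+s(t)\hat q^{G}(t)=0$, which (using $c(s(t)=0)=0$) is what I will establish.

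First I would work measure by measure. Because $\hat u$ is strongly robust optimal, it is optimal for $J^{\Q}$ for every $\Q\in\mathcal{P}$, so Lemma \ref{lemma_necessMax} applies to each such $\Q$ and gives $m\hat p^{\Q}+s\hat q^{\Q}=0$ $\Q$-a.s. Consequently the driver $\frac{\partial\hat H^{\Q}}{\partial x}=\frac{d\langle B\rangle_t}{dt}\hat u\,(m\hat p^{\Q}+s\hat q^{\Q})$ of the $\Q$-BSDE \eqref{eq_p_under_P} vanishes, whence $\hat p^{\Q}(t)=\E^{\Q}[g'(\hat X(T))\,|\,\F_t]$; moreover $\hat q^{\Q}=-\tfrac{m}{s}\hat p^{\Q}$, so $\hat p^{\Q}$ solves $d\hat p^{\Q}=-\tfrac{m}{s}\hat p^{\Q}\,dB$ and therefore has the explicit Dol\'eans form $\hat p^{\Q}(t)=\E^{\Q}[g'(\hat X(T))]\,\mathcal{E}_t$, where $\mathcal{E}_t:=\exp\big(-\int_0^t\tfrac{m}{s}\,dB-\tfrac12\int_0^t\tfrac{m^2}{s^2}\,d\langle B\rangle\big)$ is one and the same (measure-independent) pricing kernel for all $\Q$.

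Next I would aggregate. Plugging $\hat p^{\Q}(t)=\E^{\Q}[g'(\hat X(T))\,|\,\F_t]$ into the minimality representation $\hat p^{G}(t)=\esup^{\P}_{\Q\in\mathcal{P}(t,\P)}\hat p^{\Q}(t)$ and using Proposition \ref{prop_rep_conditional_G_exp} gives $\hat p^{G}(t)=\hE[g'(\hat X(T))\,|\,\F_t]$ q.s. The crucial point is that, since $\mathcal{E}_t$ is $\F_t$-measurable and common to all $\Q\in\mathcal{P}(t,\P)$, it factors out of the essential supremum: $\hat p^{G}(t)=\mathcal{E}_t\,C_t$ with $C_t:=\sup_{\Q\in\mathcal{P}(t,\P)}\E^{\Q}[g'(\hat X(T))]$ a deterministic, non-increasing, hence finite-variation function of $t$. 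Applying the $G$-It\^o formula to $\mathcal{E}_t C_t$ yields $d\hat p^{G}=-\tfrac{m}{s}\hat p^{G}\,dB+\mathcal{E}_t\,dC_t$; comparing with the adjoint $G$-BSDE \eqref{eq_p_under_G} and taking the covariation with $B$ under an arbitrary $\P$ (the $dt$-term, the non-increasing $G$-martingale $\hat K$ and the finite-variation term $\mathcal{E}_t\,dC_t$ all have vanishing bracket with $B$) identifies $\hat q^{G}=\tfrac{d\langle\hat p^{G},B\rangle}{d\langle B\rangle}=-\tfrac{m}{s}\hat p^{G}$ q.s. Hence $m\hat p^{G}+s\hat q^{G}=0$ and the claim follows.

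The main obstacle is exactly this last identification of $\hat q^{G}$. When $\hat K\equiv0$ one simply invokes uniqueness of the $\P$-BSDE to get $\hat q^{G}=\hat q^{\P}$, but here $\hat K\not\equiv0$ (the terminal value $g'(\hat X(T))$ is not symmetric, since $\mathcal{E}_t$ involves $\langle B\rangle$), so $\hat p^{G}\neq\hat p^{\P}$ and that shortcut is unavailable. What rescues the argument is the Merton-specific structure: the first-order condition forces marginal utility to be proportional to the universal pricing kernel $\mathcal{E}_t$ in every scenario, which makes the factor $C_t$ in $\hat p^{G}=\mathcal{E}_tC_t$ deterministic and of finite variation, so that the whole asymmetry of $\hat p^{G}$ is confined to a finite-variation term orthogonal to $B$ and leaves the $dB$-integrand untouched. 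I would still need to dispatch the routine points: strict positivity of $\hat X$, the integrability making $\mathcal{E}_t$ and the stochastic integrals well defined, and the fact that the non-increasing part of a $G$-martingale is continuous and of finite variation so that $\langle\hat K,B\rangle=0$.
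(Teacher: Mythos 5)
Your argument is correct and, for most of its length, coincides with the paper's own proof: you fix $\Q\in\mathcal P$, invoke Lemma \ref{lemma_necessMax} to obtain $m\hat p^{\Q}+s\hat q^{\Q}=0$ $\Q$-a.s., note that the driver $\frac{\partial \hat H^{\Q}}{\partial x}$ of the $\Q$-adjoint BSDE then vanishes so that $\hat p^{\Q}(t)=\E^{\Q}[g'(\hat X(T))\,|\,\F_t]$, and aggregate through the minimality representation $\hat p^{G}(t)=\esup^{\P}_{\Q\in\mathcal P(t,\P)}\hat p^{\Q}(t)$ together with Proposition \ref{prop_rep_conditional_G_exp} to conclude that $\hat p^{G}(t)=\hE[g'(\hat X(T))\,|\,\F_t]$ is a $G$-martingale. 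These are exactly the paper's steps. Where you genuinely diverge is the final identification of $\hat q^{G}$. The paper writes the $G$-martingale decomposition $\hat p^G(t)=\hat p^G(0)+\int_0^t\hat q^G\,dB+\hat K(t)$, matches it against the adjoint $G$-BSDE to conclude that the drift $\frac{\partial H}{\partial x}=\hat u\,[m\hat p^G+s\hat q^G]\frac{d\langle B\rangle_t}{dt}$ must vanish for a.e.\ $t$, and then divides by $\hat u$ (``by assumption on $\hat u$'') to extract $m\hat p^G+s\hat q^G=0$. You instead establish the explicit product form $\hat p^{G}(t)=\mathcal E_t C_t$ with $\mathcal E_t$ a measure-independent stochastic exponential and $C_t=\sup_{\Q\in\mathcal P(t,\P)}\E^{\Q}[g'(\hat X(T))]$ deterministic, monotone and of finite variation, and you read $\hat q^{G}=-\frac{m}{s}\hat p^{G}$ directly off the covariation of $\hat p^{G}$ with $B$ under each $\P$. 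This variant buys two things: it avoids dividing by $\hat u$ (so no implicit non-vanishing assumption on the optimal control is needed at that stage), and it makes transparent why the non-symmetric part of $\hat p^{G}$ is confined to a finite-variation term with zero bracket against $B$, which is precisely the point the paper's drift-matching step leaves somewhat implicit. The price is the additional (but correct, since $\mathcal E_t>0$ is a single $\F_t$-measurable random variable common to all $\Q\in\mathcal P(t,\P)$) observation that $\mathcal E_t$ factors out of the essential supremum; note only that $C_t$ depends on the reference measure $\P$, so the It\^o computation on $\mathcal E_tC_t$ is really a $\P$-wise argument whose conclusion then holds quasi-surely by arbitrariness of $\P$. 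Both routes are sound, and the remaining routine points you list (strict positivity of $\hat X$, integrability of $\mathcal E$, $\langle \hat K,B\rangle=0$) are indeed the only loose ends.
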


\begin{proof}
	Fix a probability measure $\P\in\mathcal{P}$. By Lemma \ref{lemma_necessMax} we know that $\hat u$ is a critical point ($\P$-a.s.) of the  Hamiltonian
	\[
		\frac{\partial}{\partial u} H(t,\hat X(t), \hat u,\hat p^{\P}(t),\hat q^{\P}(t))=0,\ \forall\ t\in[0,T].
	\]
	Using this fact we get
	\begin{align*}
		0&=\frac{\partial }{\partial u} H(t, \hat X(t), \hat u, \hat p^{\P}(t), \hat q^{\P}(t))\\
			&=\left[\hat X(t)m(t)\hat p^{\P}(t)+\hat X(t)s(t)\hat q^{\P}(t) \right]\frac{d\langle B\rangle (t)}{dt}.
	\end{align*}
	By the assumption on the process $s$ we compute that
	\[
		\hat q^{\P}(t)=-\frac{m(t)}{s(t)}\hat p^{\P}(t).
	\]
	But then we see that $\hat p^{\P}$ has dynamics
	\begin{align*}
		d\hat p^{\P}(t)&=-\frac{\partial}{\partial x}H(t,\hat X(t),\hat u(t), \hat p^{\P}(t), -\frac{m(t)}{s(t)}\hat p^{\P}(t))dt-\frac{m(t)}{s(t)}\hat p^{\P}(t)dB(t)\\
		&=-\frac{m(t)}{s(t)}\hat p^{\P}(t)dB(t).
	\end{align*}
	Hence 
	\[
		\hat p^{\P}(t)=\E^{\P}[g'(\hat X(T))|\F_t]\quad \P-a.s.
	\]
	We also remember that
	\[
		\hat p^{G}(t)=\sideset{}{^{\P}}\esup_{\Q\in\mathcal{P}(t,\P)}\hat p^{\Q}(t)=\sideset{}{^{\P}}\esup_{\Q\in\mathcal{P}(t,\P)}\E^{\Q}[g'(\hat X(T))|\F_t]\quad \P-a.s.
	\]
	Thus by the characterization of the conditional $G$-expectation in \eqref{eq_cond_exp_rep} we obtain that $\hat p^{G}(t)$ is a $G$-martingale with representation
	\[
		\hat p^G(t)=\hE[g'(\hat X(T))|\F_t]=\hE[g'(\hat X(T))] + \int_0^t\hat q^G(s)dB(s)+\hat K(t)\quad q.s.
	\]
	and consequently it has dynamics
	\[
		d\hat p^{G}(t)=\hat q^G(t)dB(t)+d\hat K(t).
	\]
	But in that case we know that for almost all $t\in[0,T]$ we must have that
	\[
		0=\frac{\partial}{\partial x}H(t,\hat X(t),\hat u(t),\hat p^G(t),\hat q^G(t))=\hat u(t)[m(t)\hat p^G(t)+s(t) \hat q^G(t)]\frac{d\langle B\rangle (t)}{dt}\ q.s.
	\]
	By assumption on $\hat u$ we conclude that 
	\[
		m(t)\hat p^G(t)+s(t) \hat q^G(t)=0\ q.s.
	\]
	Hence
	\[
		 \hat q^G(t)=-\frac{m(t)}{s(t)}\hat p^{G}(t)
	\]
	and we can easily check then that
	\[
		\frac{\partial}{\partial u}H(t,\hat u,\hat p^G(t),\hat q^G(t))=0.
	\]
\end{proof}

\section{Examples}\label{section:ex}

We now consider some examples to illustrate the previous results. In the sequel we assume to work with a one-dimensional  $G$-Brownian motion with operator $G$ of the form
\begin{equation}\label{G-operator}
	G(a):=\frac{1}{2}(a^+ - \underline {\sigma}^2a^-),\quad \underline {\sigma}^2>0,
\end{equation}
i.e. with quadratic variation $\langle B\rangle (t)$ lying within the bounds $\underline {\sigma}^2t$ and $t$.

\subsection{Example I}
Consider 
\begin{equation}
dX(t)= dB(t)-c(t) dt.
\end{equation}
where $c(t)$, $t\in [0,T]$, is stochastic process such that $c(t)\in L^1_G(\Omega_t)$ for all $t\in [0,T]$. We wish to solve the optimal control problem for every  $\P\in\mathcal{P}$ under the performance criterion

\begin{equation}\label{pr_ex1}
J^{\P}(c)=\E^{\P}\left[\int_0^T \ln c(t) dt +X(T)\right].
\end{equation}
In the notation of Section \ref{section:suff_maxprinciple}, we have chosen here $f(t,x,c)=\ln c$ and $g(x)=x$, i.e. $g'(x)=1$.
Then the Hamiltonian is given by
\begin{equation}\label{eq_hamEx1}
H(t,x,c,p,q)=\ln c + q \frac{d\langle B\rangle_t }{dt}- c p, 
\end{equation}
and by \eqref{eq_p} we obtain
\begin{align}
dp(t)&= q(t) dB(t);\ 0\leq t\leq T, \label{eq_pqex1}\\ 
p(T)&=g'(X(T))=1,\nonumber
\end{align}
i.e. $q=0, p=1$. Furthermore by \eqref{eq_hamEx1} we have
\begin{equation*}
\frac{\partial H}{\partial c}=\frac{\partial }{\partial c}[\ln c -cp]=\frac{1}{c} -p,
\end{equation*}
i.e. $\hat c(t)=1$, $t\in [0,T]$, is strongly robust optimal by Theorem \ref{theo_suff}. 

Note that by the proof we could choose a general utility function instead of logarithmic utility without losing the existence of the strongly robust optimal control.



\subsection{Example II}

Consider
\begin{equation}
dX(t)= X(t)[b(t)dt + dB(t)]-c(t) dt,
\end{equation}
and Problem \eqref{pr_ex1}.  Here $b(t)$ is a deterministic measurable function. Then the Hamiltonian is given by
\begin{equation}\label{eq_hamEx1_2}
H(t,x,c,p,q)=\ln c + xq \frac{d\langle B\rangle_t }{dt}+ (xb(t)- c)) p.
\end{equation}
Here 
\begin{align}
dp(t)&= -\left(b(t)p(t) + q(t)\frac{d\langle B\rangle_t }{dt}\right) dt + q(t) dB(t);\ 0\leq t\leq T,\\ \label{eq_pqex1_2}
p(T)&=g'(X(T))=1.\nonumber
\end{align}
Put $q=0$, then
\begin{align*}
dp(t)&= -b(t)p(t)  dt,\\
p(T)&=1,
\end{align*}
i.e. $p(t)=\exp{\int_t^T b(s) ds}$ and $\hat c(t)=\frac{1 }{p(t)}$  is strongly robust optimal by Theorem \ref{theo_suff}. 

\subsection{Example III}
Consider the Merton-type problem with the logarithmic utility: let 
\[
	dX^u(t)=X^u(t)\left[m(t)u(t)d\langle B\rangle (t)+s(t)u(t)dB(t)\right]
\]
where $u(t)\in L^2_G(\Omega_t)$ for all $t\in [0,T]$ and $m$ and $s$ are two deterministic functions. Assume that $s(t)\neq 0$ for all $t\in[0.T]$. We are interested in to find a strongly robust optimal control problem for the family of probability measures $\mathcal{P}$ with the performance criterion given by
\[
	J^{\P}(u):=\E^{\P}[\ln X^u(T)].
\]
The Hamiltonian associated with this problem is given by
\begin{equation}\label{eq_hamiltotnian_3}
	H(t,x,u,p,q)=xu[m(t)p+s(t)q]\frac{d\langle B\rangle}{dt}(t)
\end{equation}
and for each admissible control $u$ we consider adjoint $G$-BSDE of the form
\begin{align*}
	dp(t)&=-u(t)[m(t)p(t)+s(t)q(t)]d\langle B\rangle(t)+g(t)dB(t)+dK(t)\\
	p(T)&=X^{-1}(T).
\end{align*}
Note that the adjoint equation is linear, hence  by Remark 3.3 in \cite{Girsanov} we obtain the representation formula for the solution 
\[
	p(t)=X^{-1}(t)\hE[X(T)X^{-1}(T)|\F_t]=X^{-1}(t).
\]
Moreover, by the dynamics of $X^{-1}$ we deduce that
\[
	q(t)=-u(t)s(t)p(t),\quad K\equiv 0.
\]
Plugging this solution into the Hamiltonian \eqref{eq_hamiltotnian_3} we get that
\[
	H(t,X^u(t),v,p(t),q(t))=X^u(t)v[m(t)-u(t)s^2(t)]p(t)\frac{\langle B\rangle}{dt}(t),
\]
hence the critical point of the Hamiltonian must satisfy
\[
	\hat u(t)=\frac{m(t)}{s^2(t)}
\]	
and this is our strongly robust optimal control.

Note that we can also solve this problem directly by omega-wise maximization, without using the maximum principle and $G$-BSDE's. 
In fact we may consider more general dynamics in $X$
\[
	dX^u(t)=X^u(t)\left[b(t)u(t)dt+m(t)u(t)d\langle B\rangle (t)+s(t)u(t)dB(t)\right]
\]
and by direct computation it might be checked that the strongly robust optimal control takes the form
\[
	\hat u(t)=\frac{b(t)+m(t)\frac{d\langle B\rangle}{dt}(t)}{s^2(t)\frac{d\langle B\rangle}{dt}(t)}.
\]	
However it is important to note that this control is not quasi-continuous any more (see \cite{Song_unique}) and it doesn't have sense to consider $G$-BSDE's associated with such a control.

\section{Counterexample: the Merton problem with the power utility}

In this example we consider the Merton problem with the power utility and show that generally we cannot drop the assumption 
$\hat K\equiv 0$ without losing the strong sense of the optimality. First, we solve the classical robust utility maximization problem and then we prove that the optimal control for that problem is optimal usually only in a weaker sense, i.e. there exists a probability measure $\mathbb P\in \mathcal P$ such that the  control is not optimal under $\mathbb P$, even though the control satisfies all the conditions of the sufficient maximum principle with the exception of $\hat K\equiv 0$.

Consider first the classical robust utility maximization problem
\[
	u\mapsto \hat{J}(u):=\hE[\int_0^Tf(t,X(t),u(t))dt+g(X(T))],
\]
where $X$ has dynamics for any $u\in\A$
\[
	dX(t)=m(t)X(t)u(t)d\langle B\rangle (t)+s(t)X(t)u(t)dB(t).
\]
Then 
\[
	X(t)=x\exp\{\int_0^ts(r)u(r)dB(r)+\int_0^t[m(r)u(r)-\frac{1}{2}s^2(r)u^2(r)]d\langle B\rangle(r)\}.
\]
We assume that $m$ and $s$ are bounded and deterministic and $s\neq0$. Put $f\equiv 0$ and $g(x)=\frac{1}{\alpha}x^{\alpha},\ \alpha \in ]0,1[$. Hence 
\begin{align*}
	\hat J(u)&=\frac{x}{\alpha}\hE[\exp\{\alpha\int_0^Ts(r)u(r)dB(r)+\alpha\int_0^T[m(r)u(r)-\frac{1}{2}s^2(r)u^2(r)]d\langle B\rangle(r)\}]\\
	&=\frac{x}{\alpha}\hE[\exp\{\alpha\int_0^Ts(r)u(r)dB(r)-\frac{\alpha^2}{2}\int_0^Ts^2(r)u^2(r)d\langle B\rangle(r)\}\cdot\\
	&\quad \cdot\exp\{\int_0^T[\alpha m(r)u(r)+\frac{\alpha^2-\alpha}{2}s^2(r)u^2(r)]d\langle B\rangle(r)\}].
\end{align*}
We now use the Girsanov theorem for $G$-expectation and the $G$-martingale 
$$
M(t):=\exp\{\alpha\int_0^ts(r)u(r)dB(r)-\frac{\alpha^2}{2}\int_0^ts^2(r)u^2(r)d\langle B\rangle(r)\},
$$ 
see Section 5.2. in \cite{Girsanov}. We get the sublinear expectation $\hE^{u}$ under which the process ${B}^u(t):=B(t)-\int_0^ts(r)u(r)d\langle B\rangle (r)$ is a $G$-Brownian motion. Note that 
\begin{equation}\label{eq_GB}
\langle B^u \rangle(t)=\langle
 B\rangle(t)
 \end{equation}
q.s. Moreover it is easy to check that the deterministic control 
 \[\hat u(r)=\frac{m(r)}{(1-\alpha)s^2(r)}\] 
 is a maximizer of the following function
 \[
	u\mapsto \alpha m(r)u +\frac{\alpha^2-\alpha}{2}s^2(r)u^2.
 \]
 Hence we get that
 \begin{align}
	\hat J(u)
	&=\frac{x}{\alpha}\hE^{u}[\exp\{\int_0^T[\alpha m(r)u(r)+\frac{\alpha^2-\alpha}{2}s^2(r)u^2(r)]d\langle B\rangle(r)\}]\notag\\
	&\leq \frac{x}{\alpha}\hE^{u}[\exp\{\int_0^T[\alpha m(r)\hat u(r)+\frac{\alpha^2-\alpha}{2}s^2(r)(\hat u)^2(r)]d\langle  B^u\rangle(r)\}]\notag\\
	&= \frac{x}{\alpha}\hE^{\hat u}[\exp\{\int_0^T[\alpha m(r)\hat u(r)+\frac{\alpha^2-\alpha}{2}s^2(r)(\hat u)^2(r)]d\langle  B^{\hat u}\rangle(r)\}]=\hat J(\hat u).\label{eq_ex3_Girsanov}
\end{align}
The last equalities are consequence of \eqref{eq_GB} and of the fact that the integrand is deterministic and that $B^u$ and $B^{\hat u}$ are $G$-Brownian motions under $\E^u$ and $\E^{\hat u}$ (respectively). Equation \eqref{eq_ex3_Girsanov} shows then that $\hat u$ is an optimal control for this weaker optimization problem. 

Now consider the adjoint equation related to $\hat u$ in terms of a $G$-BSDE. The backward equation is linear due to linearity of the Hamiltonian, hence we may use the conditional expectation representation of a linear $G$-BSDE's (compare with Remark 3.3 in \cite{Girsanov}):
\begin{align*}
	\hat p^G(t)&=\frac{1}{\hat X(t)}\hE\left[(\hat X(T))^{\alpha-1}\hat X(T)|\mathcal{F}_t\right]\\
	&=(\hat X(t))^{\alpha-1} \hE\left[\exp\{\alpha\int_t^Ts(r)\hat u(r)dB(r)-\frac{\alpha^2}{2}\int_t^Ts^2(r)\hat u^2(r)d\langle B\rangle(r)\}\cdot\right.\\
	&\quad \left.\cdot\exp\{\int_t^T[\alpha m(r)\hat u(r)+\frac{\alpha^2-\alpha}{2}s^2(r)\hat u^2(r)]d\langle B\rangle(r)\} \LARGE| \mathcal{F}_t\right].
\end{align*}
Applying the Girsanov theorem and the same reasoning as in \eqref{eq_ex3_Girsanov} we easily get that
\begin{align*}
	\hat p^G(t)&=\frac{1}{\hat X(t)}\hE\left[(\hat X(T))^{\alpha-1}\hat X(T)|\mathcal{F}_t\right]\\
	&=(\hat X(t))^{\alpha-1} \hE^{\hat u}\left[\exp\{\int_t^T[\alpha m(r)\hat u(r)+\frac{\alpha^2-\alpha}{2}s^2(r)\hat u^2(r)]d\langle B\rangle(r)\} \LARGE| \mathcal{F}_t\right]\\
	&=(\hat X(t))^{\alpha-1} \hE^{\hat u}\left[\exp\{\int_t^T\frac{\alpha}{2(1-\alpha)}\frac{m^2(r)}{s^2(r)}d\langle B\rangle(r)\} \LARGE| \mathcal{F}_t\right]\\
	&=(\hat X(t))^{\alpha-1} \hE^{\hat u}\left[\exp\{\int_t^T\frac{\alpha}{2(1-\alpha)}\frac{m^2(r)}{s^2(r)}d\langle B^{\hat u}\rangle(r)\} \LARGE| \mathcal{F}_t\right]\\
	&=(\hat X(t))^{\alpha-1} \hE\left[\exp\{\int_t^T\frac{\alpha}{2(1-\alpha)}\frac{m^2(r)}{s^2(r)}d\langle B\rangle(r)\} \LARGE| \mathcal{F}_t\right].
\end{align*}
Furthermore we also know that the integrand is always positive by the assumption $\alpha \in ]0,1[$, hence we get by the representation of the conditional $G$-expectation \eqref{eq_cond_exp_rep} that for every $\P\in\mathcal P$ and 
by \eqref{G-operator} that
\begin{align*}
\hE&\left[\exp\{\int_t^T\frac{\alpha}{2(1-\alpha)}\frac{m^2(r)}{s^2(r)}d\langle B\rangle(r)\} \LARGE| \mathcal{F}_t\right]\\&=\sideset{}{^{\P}}\esup_{\P'\in\mathcal{P}(t,\P)}\E^{\P'}\left[\exp\{\int_t^T\frac{\alpha}{2(1-\alpha)}\frac{m^2(r)}{s^2(r)}d\langle B\rangle(r)\} \LARGE| \mathcal{F}_t\right]\\
&=\exp\{\int_t^T\frac{\alpha}{2(1-\alpha)}\frac{m^2(r)}{s^2(r)}dr\}\quad \P-a.s.
\end{align*}
Hence
\begin{align*}
	\hat p^G(t)
	&=(\hat X(t))^{\alpha-1} \exp\{\int_t^T\frac{\alpha}{2(1-\alpha)}\frac{m^2(r)}{s^2(r)}dr\}=:(\hat X(t))^{\alpha-1}\cdot Z(t).
\end{align*}
By integration by parts for $\hat X^{-1}$ and $Z$ one can compute that
\begin{align}
	d\hat p^G(t)&=-\frac{m(t)}{s(t)}\hat p^G(t)dB(t)+\frac{\alpha m^2(t)}{2(1-\alpha)s^2(t)}\hat p^G(t)(d\langle B\rangle (t)-dt).\label{eq_ex3_dynamics}
\end{align}
By comparing equation \eqref{eq_ex3_dynamics} with the adjoint equation \eqref{eq_p} we obtain first that
\[
	\hat q^G(t)=-\frac{m(t)}{s(t)}\hat p^G(t)
\]
and hence that $\hat u$ is a maximizer of the function $u\mapsto H(t,\hat X(t),u,\hat p^G(t), \hat q^G(t))$. Secondly, we get that the process $\hat K$  has the explicit form
\[
	\hat K(t)=\int_0^t \frac{\alpha m^2(r)}{2(1-\alpha)s^2(r)}\hat p^G(r)(d\langle B\rangle (r)-dr)
\]
and, consequently  is a non-trivial process. 

To summarize the example so far: we have shown that $\hat u$ is optimal in a weaker sense. We also showed that it satisfies the assumption for the necessary maximum principle for strongly robust optimality and that all assumptions of the sufficient maximum principle are satisfied, with the exception of the vanishing of the process $\hat K$. Now we prove that $\hat u$ is not optimal in the stronger sense, hence the assumption on the process $\hat K$ is really crucial for our result and cannot be dropped.

Fix $\P \in \mathcal P$ and assume that $\hat u$ is optimal under $\P$. By Lemma \ref{lemma_necessMax} we know that $\hat u$ is a critical point of the Hamiltonian evaluated in $\hat p^{\P}$ and $\hat q^{\P}$. Hence, by the same analysis as in Theorem \ref{theorem_necessMax_Merton} we see that
\[
	d\hat p^{\P}(t)=-\frac{m(t)}{s(t)}\hat p^{\P}(t)dB(t),
\]
therefore
\begin{equation}\label{eq_ex3_pT1}
	\hat p^{\P}(T)=\hat p^{\P}(0)\exp \left\{-\int_0^T\frac{m(t)}{s(t)}dB(t)-\frac{1}{2}\int_0^T\frac{m^2(t)}{s^2(t)}d\langle B\rangle(t)\right\}.
\end{equation}

However, we know by the dynamics of $\hat X$ and the terminal condition of $\P$-BSDE that
\begin{align}
	\hat p^{\P}(T)&=(\hat X(T))^{\alpha-1}\notag\\
	&=x^{\alpha-1}\exp\left\{ (\alpha-1)\left[\int_0^T\hat u(t)s(t)dB(t)+\int_0^T \left(\hat u(t)m(t)-\frac{1}{2}\hat u^2(t)s^2(t)\right)d\langle B\rangle (t) \right]\right\}\notag
	\\&=x^{\alpha-1}\exp\left\{ -\int_0^T\frac{m(t)}{s(t)}dB(t)-\frac{1}{2}\int_0^T\frac{m^2(t)(1-2\alpha)}{s^2(t)(1-\alpha)}d\langle B\rangle(t)\right\}.\label{eq_ex3_pT2}
\end{align}

Dividing \eqref{eq_ex3_pT1} by \eqref{eq_ex3_pT2} we get that
\[
	1=\frac{\hat p^{\P}(0)}{x^{\alpha-1}}\exp\left\{\int_0^T\frac{\alpha m^2(t)}{2s^2(t)(\alpha-1)}d\langle B\rangle(t)\right\}.
\]
The equalities here are $\P$-a.s. so we get that the integral $\int_0^T\frac{\alpha m^2(t)}{2s^2(t)(\alpha-1)}d\langle B\rangle(t)$ must be equal $\P$-a.s. to a constant. However the quadratic variation of the canonical process under $\P$ is generally a non-deterministic stochastic process, hence also the integral is a random variable, in general non-constant. This shows that $\hat u$ is optimal under $\P$ only for very specific probability measures such as the Wiener measure.

To conclude, $\hat u$ is not optimal for every probability measure $\P\in\mathcal{P}$ even though it is a maximizer of the Hamiltonian related to $\hat u$. This example shows that the new strong notion of optimality is rather restricted and we may expect it only in very special cases when the process $\hat K$ vanishes. 

\bibliographystyle{plain}
\bibliography{biblio_max}

\end{document}